\documentclass[12pt]{amsart}
\usepackage{amssymb,amsmath,amsthm}
\setlength{\textheight}{8.2in}
\addtolength{\oddsidemargin}{-.5in}
\addtolength{\textwidth}{1in}

\numberwithin{equation}{section}
\begin{document}

\theoremstyle{plain}
\newtheorem{theorem}{Theorem}[section]
\newtheorem{lemma}[theorem]{Lemma}
\newtheorem{proposition}[theorem]{Proposition}
\newtheorem{corollary}[theorem]{Corollary}
\newtheorem{conjecture}[theorem]{Conjecture}

\def\mod#1{{\ifmmode\text{\rm\ (mod~$#1$)}
\else\discretionary{}{}{\hbox{ }}\rm(mod~$#1$)\fi}}

\theoremstyle{definition}
\newtheorem*{definition}{Definition}

\theoremstyle{remark}
\newtheorem*{remark}{Remark}
\newtheorem{example}{Example}[section]
\newtheorem*{remarks}{Remarks}

\newcommand{\qq}{{\mathbf Q}}
\newcommand{\rr}{{\mathbf R}}
\newcommand{\nn}{{\mathbf N}}
\newcommand{\zz}{{\mathbf Z}}
\newcommand{\al}{\alpha}
\newcommand{\be}{\beta}
\newcommand{\ga}{\gamma}
\newcommand{\mz}{{\mathcal Z}}
\newcommand{\mi}{{\mathcal I}}
\newcommand{\ep}{\epsilon}
\newcommand{\la}{\lambda}
\newcommand{\de}{\delta}
\newcommand{\De}{\Delta}
\newcommand{\Ga}{\Gamma}
\newcommand{\si}{\sigma}

\title{Laws of Inertia in Higher Degree Binary Forms}

\author{Bruce Reznick}
\address{Department of Mathematics, University of 
Illinois at Urbana-Champaign, Urbana, IL 61801} 
\email{reznick@math.uiuc.edu}
\subjclass[2000]{Primary: 11E76, 15A21}
\begin{abstract}
We consider representations of real forms of even degree as a linear
combination of powers of real linear forms, counting the number of
positive and negative coefficients. We show that the natural
generalization of Sylvester's Law of Inertia holds for binary
quartics, but fails for binary sextics.
\end{abstract}
\date{\today}
\maketitle

\section{Introduction and Overview}

Let $F_{n,d}$ denote the set of real forms $p(x_1,...,x_n)$ of degree
$d$,  $d \ge 1$.  For even $d=2s$, we consider {\it representations}
of $p$ as a linear combination  of the $2s$-th powers of real linear forms:
\begin{equation} \label{E:rep}
p(x_1,\dots,x_n) = \sum_{j=1}^r \la_j\bigl(\al_{j1}x_1 + \dots +
\al_{jn}x_n\bigr)^{2s}, \qquad 0 \neq \la_j \in \mathbb R.
\end{equation}
If there are $a$ positive  (and $b$ negative)
coefficients among the $\la_j$'s, then we  say that \eqref{E:rep} has {\it badge}
$(a,b)$.
Two linear forms are {\it  distinct} if they (and their
$2s$-th powers) are not proportional; a representation is {\it
 honest} if its summands are pairwise distinct.  
Let ${\mathcal B}(p)$ denote the set of possible badges of honest
representations of $p$.   Badges are ordered componentwise: 
$(a,b) \preceq (c,d)$ if $a \le c$ and $b \le d$. Any minimal badge  
 under this ordering is called a {\it signature} of $p$. The set of
 signatures of $p$ is denoted ${\mathcal S}(p)$; in case $p$ has a
 unique signature, we call it {\it the} signature of $p$ and denote it
 $S(p)$. The {\it length} of $p$ is the minimum value of $a+b$ in
 ${\mathcal B}(p)$; it is the shortest possible representation (1.1).
If $q$ is obtained from $p$ by
an invertible linear change of variables, then   ${\mathcal B}(q) =
{\mathcal  B}(p)$.
Identities such as 
\begin{equation*}
x^2 + (x+y)^2 + y^2 = (x-y)^2 + (x+2y)^2 - 3y^2 = 2(x + \tfrac 12 y)^2
+ \tfrac 32 y^2  
\end{equation*}
and
\begin{equation}\label{E:4diff}
(x+2y)^4 - 4(x+y)^4 + 6x^4 - 4(x-y)^4 + (x-2y)^4 = 24y^4
\end{equation}
show that  ${\mathcal B}(p)$ may have more than one element.
We are also interested in sequences $\{p_m\} \subset F_{n,d}$ so that $p_m
\to p$ coefficientwise and $S(p_m) = (a,b)$, but $(a,b) \not\in {\mathcal S}(p)$,
and call this a {\it jump signature sequence}. 

With the exception of a couple of remarks,  we shall restrict
our attention here to binary forms and rename the variables $(x,y)$.
More generally, although the signs of the $\la_j$'s are no longer
relevant, length may be defined over any field which contains
the coefficients of $p$, and for odd degree as well; see \cite{R3} for
much more on this. 

A simple consequence of Sylvester's Law of Inertia (Theorem 2.2) is
that a quadratic form in 
$n$ variables has a unique signature. The main goal of this paper is
to show that binary quartics have unique signatures, but a binary sextic
might not. 

The paper is organized as follows. In section two, we review a number
of useful tools from the literature, many of them due to
J. J. Sylvester. In section three, we prove some general results about
signatures (Theorems 3.1 and 3.2). We find all 
 possible signatures for $p \in F_{2,2s}, s \ge 2$, though not all
 possible sets ${\mathcal S}(p)$.  If $p \neq \pm \ell^{2s}$ is a product of $2s$ real 
linear forms, then it has unique signature $(s,s)$. Associated to $p
\in F_{2,2s}$ is the catalecticant quadratic form $H_p$ in $s+1$ variables.
If $(a,b) \in {\mathcal B}(p)$, then $S(H_p)
\preceq (a,b)$. If $(a,b)$ and $(c,d)$ are both signatures of $p$,
then $a+b+c+d \ge 2s+4$.  
In section four, we show (Theorem 4.1) that binary quartics have
unique signatures, and (Theorem 4.2) if
$p$ has a definite quadratic factor, then $S(p) =
S(H_p)$; otherwise, $S(p) =  (2,2)$. 
 We show that signatures for binary sextics are more
complicated (Theorem 4.4): if  $p(x,y) = x^5y + \al x^3y^3 + xy^5$, $\al \in
(-2,0]$, then ${\mathcal S}(p) = \{(2,3), (3,2)\}$.
Section five contains some open questions and conjectures.

I thank Prof. Krishna Alladi and Prof. Manjul Bhargava and the
participants in the Higher Degree Forms conference at the University
of Florida in May 2009 for their encouragement and support of this
work.

\section{Tools from the literature} 

Every form of degree $d$ is a linear combination of $d$-th powers of linear forms (see
e.g. \cite[p.30]{R1}.) A stronger result holds for binary forms.

\begin{theorem}
Any set  $\{(\al_j x + \be_j y)^d: 0 \le j \le d\}$ of pairwise distinct
$d$-th powers over $\mathbb R$ is linearly independent, and spans $F_{2,d}$.
\end{theorem}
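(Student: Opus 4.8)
The plan is to establish two facts: that $d+1$ pairwise distinct $d$-th powers in $F_{2,d}$ are linearly independent, and that $\dim F_{2,d} = d+1$ so they form a basis. The second fact is immediate, since a binary form of degree $d$ is a linear combination of the $d+1$ monomials $x^d, x^{d-1}y,\dots,y^d$. So the heart of the matter is the linear independence.

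First I would dehomogenize: writing a linear form (up to scaling, which does not affect the span of its $d$-th power) as $x + t_j y$ or, to cover the one exceptional direction, as $y$ itself, the question becomes whether the one-variable polynomials $(1 + t_j u)^d$ are linearly independent. Expanding, $(1+t_j u)^d = \sum_{k=0}^d \binom{d}{k} t_j^k u^k$, so the coordinate vector of $(\al_j x + \be_j y)^d$ in the monomial basis is, up to the fixed nonzero binomial weights $\binom{d}{k}$, the Vandermonde-type vector $(1, t_j, t_j^2, \dots, t_j^d)$. Since the $t_j$ are distinct (this is exactly the hypothesis that the powers are pairwise distinct, once one also accounts for the form $y^d$ via a limiting or separate argument), the Vandermonde determinant $\prod_{i<j}(t_j - t_i)$ is nonzero, and dividing each column by the corresponding $\binom{d}{k} \neq 0$ preserves nonsingularity. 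Hence the $d+1$ coordinate vectors are independent.

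The one bookkeeping point requiring care is the direction $\be_j = 0$, i.e. the power $y^d$, which is not of the form $(x + t y)^d$; I would handle it either by noting it corresponds to the "point at infinity" $t_j = \infty$ and using the symmetry between $x$ and $y$ (apply the Vandermonde argument in the other affine chart), or more cleanly by working projectively from the start: associate to each linear form its root $[\be_j : -\al_j] \in \mathbb{P}^1$, observe that distinctness of the $d$-th powers is equivalent to distinctness of these $d+1$ points, and recognize the resulting matrix as the Vandermonde matrix for $d+1$ distinct points on $\mathbb{P}^1$ (with the standard projective normalization), whose nonvanishing is classical.

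The main obstacle, such as it is, is purely presentational rather than mathematical: one must be careful that "pairwise distinct $d$-th powers" is correctly translated into "pairwise distinct points of $\mathbb{P}^1$" — two linear forms have proportional $d$-th powers if and only if the forms themselves are proportional if and only if they have the same root in $\mathbb{P}^1$ — so that the hypothesis delivers exactly the distinct-nodes condition the Vandermonde argument needs. Once that identification is in place, the proof is a one-line appeal to the Vandermonde determinant together with the nonvanishing of binomial coefficients.
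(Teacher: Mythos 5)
Your proposal is correct and is essentially the paper's argument: both reduce the claim to the nonvanishing of a Vandermonde determinant formed from the coefficient vectors of the powers, with distinctness of the powers supplying the distinct nodes. The paper simply works homogeneously from the start (using the basis $\binom{d}{i}x^{d-i}y^i$, so the determinant is $\prod_{j<k}(\al_j\be_k-\al_k\be_j)$), which absorbs your binomial-weight normalization and the $y^d$ special case in one stroke.
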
 
\begin{proof}
The matrix of this set with respect to  the basis $\{\binom di
x^{d-i}y^i : 0 \le i \le d\}$ is
Vandermonde with determinant $ \prod_{j<k} (\al_j\be_k-\al_k\be_j) \neq 0$. 
\end{proof}

In particular, $p \in F_{2,2s}$ always has a representation $p(x,y) =
\sum_{j=0}^{2s} 
\la_j(x + jy)^{2s}$, where $\la_j \in \mathbb R$. Thus    ${\mathcal B}(p)$
always contains at least one badge $(a,b)$ with  length $\le 2s+1$, and
${\mathcal S}(p)$ is non-empty.

Sylvester Law of Inertia has several different expressions in the
literature. This is the one which motivates this paper.
\begin{theorem}[Sylvester's Law of Inertia]
A real quadratic form in $n$ variables has a unique signature. 
\end{theorem}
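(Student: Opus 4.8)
The plan is to identify the signature explicitly with the classical inertia indices. For a real quadratic form $q$ in $n$ variables, let $r_+$ denote the maximal dimension of a linear subspace of $\rr^n$ on which $q$ is positive definite, and $r_-$ the maximal dimension of one on which $q$ is negative definite; both numbers are manifestly independent of any particular representation of $q$. I claim that $\mathcal B(q)$ has $(r_+,r_-)$ as its unique minimal element, which is exactly the assertion that $q$ has a unique signature.

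To see that $(r_+,r_-)\in\mathcal B(q)$, diagonalize the symmetric matrix of $q$ by a congruence (equivalently, complete squares): there is an invertible linear change of coordinates after which $q=\sum_{i=1}^{p}y_i^2-\sum_{i=p+1}^{p+m}y_i^2$ for suitable $p,m$. This change of coordinates exhibits a subspace of dimension $p$ on which $q$ is positive definite, and a codimension count — any subspace of dimension exceeding $p$ meets the coordinate subspace $\{y_1=\dots=y_p=0\}$ nontrivially, and there $q\le 0$ — shows there is none larger; hence $p=r_+$, and symmetrically $m=r_-$. Since $y_1,\dots,y_{p+m}$ are linearly independent linear forms, their squares are pairwise distinct, so this is an honest representation, and its badge is $(r_+,r_-)$.

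For the lower bound, let $q=\sum_{k=1}^{r}\la_k\ell_k^2$ be an arbitrary honest representation with badge $(a,b)$, so exactly $b$ of the $\la_k$ are negative. Let $W$ be the common zero set of those $b$ linear forms $\ell_k$ with $\la_k<0$; then $\dim W\ge n-b$, and on $W$ one has $q=\sum_{\la_k>0}\la_k\ell_k^2$, which is positive semidefinite. If $N$ is a subspace of dimension $r_-$ on which $q$ is negative definite, then $N\cap W=\{0\}$, since a nonzero vector there would make $q$ simultaneously negative and nonnegative; therefore $r_-+(n-b)\le\dim N+\dim W\le n$, i.e.\ $r_-\le b$. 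The symmetric argument using the positive-coefficient forms gives $r_+\le a$. Hence $(r_+,r_-)\preceq(a,b)$ for every $(a,b)\in\mathcal B(q)$.

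Combining the two parts, $(r_+,r_-)$ is a badge of $q$ lying $\preceq$ every badge of $q$, so it is the unique minimal badge and $\mathcal S(q)=\{(r_+,r_-)\}$. Everything here is elementary linear algebra; the only steps needing a little care are the honesty of the diagonalized representation (immediate from linear independence of the $y_i$) and the codimension count identifying $p$ with $r_+$, and neither is a genuine obstacle.
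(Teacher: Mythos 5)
Your proof is correct and rests on the same core argument as the paper's: diagonalize to produce one badge, then show any honest representation with badge $(a,b)$ must satisfy the componentwise inequality by intersecting the common zero set of the forms of one sign with a maximal definite subspace of the opposite sign and counting dimensions. Your packaging via the intrinsic inertia indices $(r_+,r_-)$ is a mild (and clean) reorganization of the paper's comparison against the fixed diagonal representation, not a genuinely different route.
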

\begin{proof}
Suppose $p(x_1,\dots,x_n)$ is a
real quadratic form. Using the standard diagonalization, there is an
invertible linear change of variables after which
\begin{equation} \label{E:diag}
p(x_1,\dots,x_n) = \sum_{j=1}^a x_j^2 - \sum_{j=a+1}^{a+b} x_j^2. 
\end{equation}
Suppose $p$ has another  representation:
\begin{equation} \label{E:alt}
p(x_1,\dots,x_n) = \sum_{j=1}^{c+d} \la_j\bigl(\al_{j1}x_1 + \dots +
\al_{jn}x_n\bigr)^2,
\end{equation}
where for simplicity, $\la_j >0 $ for $ 1\le j \le c$ and $\la_j < 0$
for $c+1 \le j \le c+d$. We claim that $c \ge a$ and $d \ge
b$. Indeed, if $c < a$, then there exists  $(\bar x_1, \dots, \bar x_a)$
 so that
\begin{equation*}
\al_{j1} \bar x_1 + \dots + \al_{ja} \bar x_a = 0,\qquad  1 \le j \le c.
\end{equation*}
The two representations then imply a contradiction:
\begin{equation*}
0 \overset{\eqref{E:diag}}{<}  p(\bar x_1, \dots, \bar x_a,0,\dots,0)
\overset{\eqref{E:alt}}{\le} 0.
\end{equation*}
A similar contradiction follows if $d < b$.
\end{proof}

The diagonalization badge $(a,b)$ from \eqref{E:diag}
 is the signature of the quadratic form $p$, both
in the traditional sense and in our current definition.
No such simple argument applies in higher degree, since 
diagonalization is usually not  possible.

Sylvester proved the next theorem in the course of establishing the
canonical expressions for binary forms. 
The original sources are \cite{G,S1,S2}. Modern discussions and proofs
can be found in \cite{K, KR,R2,R3}.

\begin{theorem}[Sylvester]
Suppose $p(x,y) = \sum_{j=0}^d \binom dj a_j x^{d-j}y^j \in \mathbb C[x,y]$ and 
\begin{equation*}
h(x,y)
= \sum_{j=0}^r 
c_jx^{r-j}y^j = \prod_{k=1}^{r} (\beta_k x - \alpha_k y)
\end{equation*}
 is  a product of pairwise
distinct linear factors over $\mathbb C[x,y]$.   Then there exist
$\lambda_k\in \mathbb C$ so that  
\begin{equation}\label{E:money}
p(x,y) = \sum_{k=1}^r \lambda_k (\alpha_k x + \beta_k y)^d
\end{equation}
if and only if
\begin{equation}\label{E:hankel}
\begin{pmatrix}
a_0 & a_1 & \cdots & a_r \\
a_1 & a_2 & \cdots & a_{r+1}\\
\vdots & \vdots & \ddots & \vdots \\
a_{d-r}& a_{d-r+1} & \cdots & a_d
\end{pmatrix}
\cdot
\begin{pmatrix}
c_0\\c_1\\ \vdots \\ c_r
\end{pmatrix}
=\begin{pmatrix}
0\\0\\ \vdots \\ 0
\end{pmatrix}.
\end{equation}
\end{theorem}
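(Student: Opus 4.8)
The plan is to prove Sylvester's theorem as a statement of linear algebra that encodes the apolarity pairing between forms of degree $d$ and forms of degree $r$. First I would set up the apolarity (Hankel) pairing: for a linear form $\ell = \al x + \be y$ and the form $h(x,y) = \sum_j c_j x^{r-j}y^j$, observe that $h$ annihilates $\ell^d$ under the bilinear pairing encoded by the Hankel matrix in \eqref{E:hankel} precisely when $h(\be, -\al) = 0$, i.e.\ when $\be x - \al y$ divides $h$. This is a short computation: applying the differential operator $h(\partial_y, -\partial_x)$ (or equivalently contracting the coefficient vector of $\ell^d$ against the Hankel matrix times $(c_0,\dots,c_r)^t$) to $(\al x + \be y)^d$ yields a scalar multiple of $h(\be,-\al)\,(\al x+\be y)^{d-r}$.

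Next I would establish the ``if'' direction. Assume \eqref{E:hankel} holds, i.e.\ the coefficient vector $(a_0,\dots,a_d)$ of $p$ lies in the kernel of the Hankel map determined by $h$ (viewed as a map from the $(r+1)$-dimensional space of degree-$r$ forms). Since $h$ has $r$ distinct linear factors, the forms $(\al_k x + \be_k y)^d$ for $1 \le k \le r$ are linearly independent by Theorem 2.1, and by the computation above each of them also lies in this kernel. The key step is a dimension count: the Hankel map from the $(r+1)$-dimensional space of degree-$r$ forms to the space of degree-$(d-r)$ forms has some kernel $K$ containing $p$; dually, the space of degree-$d$ forms $q$ whose coefficient vector is annihilated by $(c_0,\dots,c_r)$ in the sense of \eqref{E:hankel} has dimension exactly $r$ (the Hankel matrix has $d-r+1$ rows, and for $h$ with distinct roots one checks it has full rank $d-r+1$, so the solution space has dimension $(d+1)-(d-r+1) = r$). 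Since the $r$ independent powers $(\al_k x+\be_k y)^d$ all lie in this $r$-dimensional space, they span it, and hence $p$ is a linear combination of them, giving \eqref{E:money}.

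For the ``only if'' direction, I would simply reverse the last step: if $p = \sum_k \la_k(\al_k x + \be_k y)^d$, then since each summand satisfies \eqref{E:hankel} (again by the annihilation computation, because $\be_k x - \al_k y \mid h$), so does $p$ by linearity. This direction is essentially immediate once the pairing computation is in hand.

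The main obstacle I anticipate is the rank claim for the Hankel (catalecticant) matrix: one must argue that when $h = \prod_{k=1}^r(\be_k x - \al_k y)$ has \emph{distinct} factors, the $(d-r+1)\times(r+1)$ Hankel matrix in \eqref{E:hankel} has full row rank $d-r+1$ (equivalently its null space has the expected dimension $r$), so that the dimension count closes. This can be handled by exhibiting $d-r+1$ linearly independent rows, or more slickly by noting that the transpose map sends a degree-$(d-r)$ form $g$ to the coefficient vector of the product $g \cdot h$ up to the apolarity identification --- multiplication by a fixed nonzero $h$ is injective on polynomials --- which forces full rank; the distinctness of the roots of $h$ is exactly what guarantees the $r$ powers $(\al_k x + \be_k y)^d$ are independent (Theorem 2.1) and thus actually fill out the kernel rather than a proper subspace. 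A secondary bookkeeping point is getting the binomial coefficients $\binom dj$ in the definition of $p$ to match the plain coefficients $c_j$ of $h$ correctly in the pairing; I would absorb these into the normalization of the apolarity bracket at the outset.
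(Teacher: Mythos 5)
The paper does not actually prove Theorem 2.3; it defers to the classical and modern references (Gundelfinger, Sylvester, Kung, Kung--Rota, \cite{R1,R2}), so there is no in-paper argument to compare against. Your apolarity proof is the standard modern one and its structure is sound: the ``only if'' direction follows by linearity once you know that the Hankel pairing of $(\al x+\be y)^d$ against $(c_0,\dots,c_r)$ is a multiple of $h$ evaluated at the point corresponding to that linear form; the ``if'' direction follows from the dimension count, since multiplication by the fixed nonzero form $h$ is injective from degree $d-r$ to degree $d$, its adjoint (the Hankel map on coefficient vectors of degree-$d$ forms) is surjective onto a $(d-r+1)$-dimensional space, so its kernel has dimension exactly $r$ and is therefore spanned by the $r$ independent powers $(\al_k x+\be_k y)^d$ supplied by the distinct factors of $h$ (independence via the Vandermonde argument of Theorem 2.1, which works over $\mathbb C$ as well as $\mathbb R$).

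One computational slip is worth fixing: with the conventions of the statement, where $h=\prod_k(\be_k x-\al_k y)$ and $a_i$ are the coefficients of $p$ against $\binom di x^{d-i}y^i$, the correct identity is that row $m$ of the Hankel system applied to $\ell^d$, $\ell=\al x+\be y$, equals $\al^{d-r-m}\be^{m}\,h(\al,\be)$; equivalently the annihilating operator is $h(\partial_x,\partial_y)=\sum_j c_j\partial_x^{r-j}\partial_y^j$, and the vanishing condition is $h(\al,\be)=0$, not $h(\be,-\al)=0$. The geometric conclusion you draw from it (``$\be x-\al y$ divides $h$'') is the right one, since $h(\al,\be)=\prod_k(\be_k\al-\al_k\be)$ vanishes exactly when $(\al,\be)\propto(\al_k,\be_k)$, so this is a bookkeeping error rather than a gap. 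Also note that the full-rank claim for the Hankel map needs only $h\neq 0$; the distinctness of the factors is used solely, as you say, to get $r$ independent powers inside the $r$-dimensional kernel.
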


If $p$ and $h$ satisfy these conditions, we say that $h$
is a {\it Sylvester form for $p$}. Theorem 2.3 will only be used in
cases where $a_j,\al_j,\be_j \in \mathbb R$. We remark
that the $\la_k$'s are found by solving \eqref{E:money},  a system of
linear equations with real coefficients: the existence of a solution
over   $\mathbb C$ implies the existence of a solution over $\mathbb R$. 

Another useful classical result is in the 
first article \cite{S3} published in the {\it Proceedings of the
  London Mathematical Society}.  Sylvester proved a conjecture of Newton
regarding the number of complex zeros of certain real
polynomials. This is presented with proof by P\'olya and Szeg\"o
in \cite{PS}[p.48, Prob.79]. 

\begin{theorem}[Sylvester]
Suppose $0 \neq \la_k$ and $\ga_1 < \dots < \ga_r$, $r \ge 2$, are real
numbers such 
that
\begin{equation*}
Q(t) = \sum_{k=1}^r\la_k(t-\ga_k)
\end{equation*}
does not vanish identically. Suppose the sequence
$(\la_1,\dots,\la_r,(-1)^d\la_1)$ has $C$ changes of sign and $Q$ has $Z$
zeros, counting multiplicity. Then $Z \le C$. 
\end{theorem}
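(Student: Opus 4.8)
\emph{Plan.} I would argue by induction, using Rolle's theorem, reducing the degree or the number of terms. Write $s_k=\operatorname{sign}\lambda_k\in\{\pm1\}$, so $C$ counts the indices at which consecutive entries of $(s_1,\dots,s_r,(-1)^ds_1)$ disagree. The base cases are $r=2$ and (for the companion induction on the degree) $d=1$, where $Q$ is affine, $Z\le1$, and $C\ge1$ because the sequence runs from $s_1$ to $-s_1$. When $r=2$ one has $Q(t)=\lambda_1(t-\gamma_1)^d+\lambda_2(t-\gamma_2)^d$, not identically zero by Theorem 2.1, with $\gamma_2$ never a root, so every root satisfies $\phi(t)^d=-\lambda_2/\lambda_1$ for $\phi(t)=(t-\gamma_1)/(t-\gamma_2)$; since $\phi'<0$, $\phi$ carries $(-\infty,\gamma_2)$ monotonically onto $(-\infty,1)$ and $(\gamma_2,\infty)$ onto $(1,\infty)$, so the number of real solutions of $\phi(t)^d=-\lambda_2/\lambda_1$ is read off from the parity of $d$ and the signs of $\lambda_1,\lambda_2$ and equals $C$; all roots of $Q$ being simple when $r=2$ (a common root of $Q,Q'$ would force $\gamma_1=\gamma_2$), this gives $Z\le C$.

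\emph{The main reductions.} Two moves present themselves. First, $Q'=d\sum_k\lambda_k(t-\gamma_k)^{d-1}$ has the same nodes and sign pattern but degree $d-1$, and Rolle gives $Z(Q)\le Z(Q')+1$; since passing from degree $d$ to $d-1$ replaces the appended term $(-1)^ds_1$ by $-(-1)^ds_1$, one computes that the sign-change count drops by one precisely when $s_r\ne(-1)^ds_1$, so in that case the inductive hypothesis on $Q'$ already yields $Z(Q)\le C$. Second, assuming (after possibly replacing $t$ by $-t$, which alters neither $Z$ nor $C$) that $Q$ is not a scalar multiple of $(t-\gamma_1)^d$, put $g(t)=Q(t)/(t-\gamma_1)^d$; then
\[
g'(t)=\frac{d}{(t-\gamma_1)^{d+1}}\,Q_1(t),\qquad Q_1(t)=\sum_{k=2}^{r}\lambda_k(\gamma_k-\gamma_1)(t-\gamma_k)^{d-1},
\]
so $Q_1$ has $r-1$ terms, degree $d-1$, coefficient-signs $s_2,\dots,s_r$ (as $\gamma_k>\gamma_1$), and is not identically zero; off $\gamma_1$ the roots of $Q$ are the zeros of $g$ and those of $Q_1$ the zeros of $g'$, and at $\gamma_1$ (Theorem 2.1 forbids order $\ge d$) the orders of vanishing of $Q$ and $Q_1$ coincide, so Rolle on $(-\infty,\gamma_1)$ and on $(\gamma_1,\infty)$ gives the crude bound $Z(Q)\le Z(Q_1)+2$, with $Z(Q_1)\le C_1$ by induction on $r$.

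\emph{The obstacle.} The clean case is thus $s_r\ne(-1)^ds_1$; the work lies in the complementary case $s_r=(-1)^ds_1$, where differentiation gives only $Z(Q)\le C+2$, and the second reduction gives only $Z(Q)\le C_1+2$ with $C_1\in\{C-1,C+1\}$ --- in either route at least one too many. Sharpening the Rolle count here is the heart of the proof, and I expect it to be the real difficulty. The naive fix --- discarding the ``$+1$'' in Rolle's inequality whenever the limiting signs of $g$ at the two ends of an interval agree --- is \emph{not} valid: the quartic $(t^2-1)^2$ (where dividing by $t^4$ produces $Q_1=t^2-1$) shows $Z\le C$ can hold with equality, and only for representations whose sign sequence alternates around the whole cycle, so the argument must detect this. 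My plan would be to strengthen the inductive statement so that, alongside $Z\le C$, it records the sign of $Q$ just inside the extreme nodes $\gamma_1,\gamma_r$ and at $\pm\infty$ --- equivalently, the sign and order of vanishing of the binary form $\sum_k\lambda_k(x-\gamma_k y)^d$ at the three boundary points $\gamma_1,\gamma_r,\infty$ of $\mathbb{P}^1(\mathbb{R})$ --- and then to re-run the Rolle argument with this data, tracking whether the zeros of $g$ (or of $Q'$) are sign-crossings or bounces; in the generic configuration this should buy back the missing unit, leaving $s_1=s_2,\ s_r=(-1)^ds_1$ as the case needing the closest attention. Beyond Rolle's theorem and this bookkeeping --- together with the degenerate subcase $\sum_k\lambda_k=0$, where $\deg Q<d$ and the associated form acquires a real zero at infinity --- I do not anticipate that any genuinely new idea is required.
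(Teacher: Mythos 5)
The paper does not actually prove this theorem: it is quoted from Sylvester \cite{S3}, with \cite{PS} cited for a modern proof, so there is no in-paper argument to measure your proposal against. Judged on its own, what you have written is a plan rather than a proof, and the plan stops exactly where the theorem's content lives. Your base cases ($r=2$, $d=1$) and the branch $s_r\ne(-1)^d s_1$ of the induction on $d$ are genuinely complete: there $C'=C-1$ and Rolle gives $Z(Q)\le Z(Q')+1\le C'+1=C$. But in the complementary case $s_r=(-1)^d s_1$ both of your reductions provably fall short by one unit ($C'=C+1$ for the derivative; $C_1\ge C-1$ for the division-by-$(t-\gamma_1)^d$ route), and the repair you propose --- augmenting the inductive statement with the signs of $Q$ near $\gamma_1$, $\gamma_r$ and $\pm\infty$ and distinguishing crossings from bounces in the Rolle count --- is precisely the nontrivial bookkeeping that constitutes the classical proof. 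Asserting that this "should buy back the missing unit" "in the generic configuration," without carrying out the case analysis or explaining how the non-generic configurations (multiple zeros of $Q'$, the degree drop when $\sum_k\lambda_k=0$, zeros of $Q_1$ created at $\gamma_1$ itself) are absorbed, leaves the theorem unproved. You have correctly diagnosed the difficulty; you have not resolved it.

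Two secondary points. The parenthetical claim that "Theorem 2.1 forbids order $\ge d$" at $\gamma_1$ is only valid when $r\le d+1$: for $r\ge d+2$ a nontrivial dependence among the $r$ distinct powers $(t-\gamma_k)^d$ can exist, so $Q$ may equal $c(t-\gamma_1)^d$ with $c\ne 0$ (compare \eqref{E:4diff}, where five distinct fourth powers combine to a single fourth power), and then $Z=d$ and your second reduction's accounting at $\gamma_1$ breaks down. Likewise, in the first reduction you must separately dispose of $Q'\equiv 0$, which is possible once $r\ge d+1$ (then $Q$ is constant and the bound is trivial). Neither of these is fatal, but the unresolved case $s_r=(-1)^d s_1$ is: as your own example of a form realizing $Z=C$ with an alternating cyclic sign pattern shows, no slack is available, so the missing unit cannot be waved away.
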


We shall apply a homogenized version of this result.

\begin{corollary}
Suppose $p(x,y)$ is a non-zero real form of degree $d$ with $\tau$ real linear
factors (counting multiplicity) and
\begin{equation}  \label{E:sylrep}
p(x,y) = \sum_{k=1}^r \la_k(\cos \theta_k x - \sin \theta_k y)^d,
\end{equation}
where $-\frac {\pi}2 < \theta_1 < \dots < \theta_r \le \frac{\pi}2$, $r
\ge 2$,  and
$\la_k \neq 0$. Suppose there are $\sigma$ sign changes in the sequence
$(\la_1,\la_2,\dots,\la_r,(-1)^d \la_1)$. Then $\tau \le \sigma$.
\end{corollary}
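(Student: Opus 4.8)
The plan is to dehomogenize and reduce to Theorem 2.5, after first getting rid of the endpoint case $\theta_r=\frac\pi2$ by a rotation of coordinates. The substitution $x\mapsto\cos\phi\,x-\sin\phi\,y$, $y\mapsto\sin\phi\,x+\cos\phi\,y$ sends each linear form $\cos\theta_k x-\sin\theta_k y$ to $\cos(\theta_k+\phi)x-\sin(\theta_k+\phi)y$, so it shifts every $\theta_k$ by $\phi$; after re-sorting the shifted angles into $(-\frac\pi2,\frac\pi2]$, those that cross $\frac\pi2$ reappear shifted by $-\pi$, which multiplies the corresponding $\la_k$ by $(-1)^d$ since $(\cos(\theta+\pi)x-\sin(\theta+\pi)y)^d=(-1)^d(\cos\theta x-\sin\theta y)^d$. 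Thus the rotated representation has a $\la$-list which is a cyclic shift of $(\la_1,\dots,\la_r)$ with the wrapped block multiplied by $(-1)^d$. I would check that this leaves $\si$ unchanged: place the $2r$ signs $e_i$, where $e_i=\operatorname{sign}\la_i$ for $1\le i\le r$ and $e_i=(-1)^d\operatorname{sign}\la_{i-r}$ for $r<i\le 2r$, around a circle; then $e_{i+r}=(-1)^de_i$ for all $i$ (indices mod $2r$), so each length-$(r+1)$ arc and its complement differ by an overall factor of $(-1)^d$ and hence carry equally many sign changes, forcing every such arc — in particular $(e_1,\dots,e_r,e_{r+1})$, whose count is $\si$, and the one read off by the rotated representation — to carry exactly $\si$ of the circle's $2\si$ sign changes. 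Since a rotation is an invertible change of variables it also preserves $\tau$. So, discarding the finitely many $\phi$ for which a shifted angle equals $\pm\frac\pi2$ or for which $y$ divides the rotated form, I may assume henceforth that $-\frac\pi2<\theta_1<\dots<\theta_r<\frac\pi2$ and $p(1,0)\neq0$.

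Now put $Q(t):=p(t,1)$. Since $|\theta_k|<\frac\pi2$ we have $\cos\theta_k>0$, so
\[
Q(t)=\sum_{k=1}^r\mu_k(t-\ga_k)^d,\qquad \mu_k:=\la_k\cos^d\theta_k,\quad \ga_k:=\tan\theta_k,
\]
with $\ga_1<\dots<\ga_r$ distinct and $\operatorname{sign}\mu_k=\operatorname{sign}\la_k$. Because $p(1,0)\neq0$, $Q$ has degree exactly $d$ and is not identically zero; every real linear factor of $p$ is non-proportional to $y$, hence of the form $x-\ga y$, and $x-\ga y$ divides $p$ to precisely the order that $t-\ga$ divides $Q$. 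Consequently the number $Z$ of real zeros of $Q$, counted with multiplicity, equals $\tau$.

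Finally I apply Theorem 2.5 to $Q$: since $r\ge2$, $\ga_1<\dots<\ga_r$ and $\mu_k\neq0$, the sequence $(\mu_1,\dots,\mu_r,(-1)^d\mu_1)$ has $C$ sign changes with $Z\le C$; and $C=\si$ because $\operatorname{sign}\mu_k=\operatorname{sign}\la_k$ for every $k$, so this sign sequence agrees with that of $(\la_1,\dots,\la_r,(-1)^d\la_1)$. Therefore $\tau=Z\le C=\si$.

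The dehomogenization bookkeeping in the middle paragraph is routine. The one point that genuinely needs care — the main obstacle — is the opening reduction: verifying that a coordinate rotation does not disturb $\si$, and that a generic rotation can simultaneously push $\theta_r$ strictly below $\frac\pi2$ and arrange $y\nmid p$. Once that is set up, the corollary is just the dehomogenized restatement of Theorem 2.5.
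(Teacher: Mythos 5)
Your proof is correct and follows essentially the same route as the paper: rotate coordinates so that all angles lie in the open interval $(-\tfrac{\pi}{2},\tfrac{\pi}{2})$ with $y\nmid p$, check that $\sigma$ is unchanged via the doubled cyclic sign sequence satisfying $e_{i+r}=(-1)^d e_i$, then dehomogenize and invoke the Sylvester--Newton sign-change theorem (which is Theorem 2.4 in the paper; your ``Theorem 2.5'' is a mislabel, since 2.5 is the corollary itself). Your circular-arc argument that every block of $r$ cyclically consecutive summands carries exactly $\sigma$ of the circle's $2\sigma$ sign changes is just a more explicit rendering of the paper's observation about cyclic permutations of the projectivized representation.
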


\begin{proof}
We first ``projectivize''  \eqref{E:sylrep} to get the following
representation for $p$:
\begin{equation}  \label{E:sylrep2}
\begin{gathered} 
 \sum_{k=1}^r \tfrac {\la_k}2(\cos \theta_k x
  - \sin \theta_k y)^d + 
\sum_{k=1}^r (-1)^d \tfrac {\la_k}2(\cos (\theta_k +\pi) x - \sin (\theta_k
+ \pi) y)^d .
\end{gathered}
\end{equation}
Observe that the sequence $\frac 12(\la_1,\la_2,\dots,\la_r,(-1)^d \la_1, (-1)^d
\la_2, \dots, (-1)^d \la_r,\la_1)$ has $2\sigma$ sign changes, as does
any cyclic permutation thereof. Thus  in taking any block of $r$ (cyclically)
consecutive summands from \eqref{E:sylrep2} as ``the'' representation
\eqref{E:sylrep}, $\sigma$ will be unchanged. Accordingly, we may make
an invertible change of variables 
\begin{equation} \label{E:dial}
(x,y) \mapsto (\cos \theta\ x + \sin \theta\ y, -\sin \theta\ x + \cos
\theta\ y)
\end{equation}
in \eqref{E:sylrep} to ``dial'' the angles as we wish, without
changing $\tau$ and $\sigma$. We do so to choose $r$ consecutive
angles in the interval $(-\tfrac {\pi}2, \tfrac {\pi}2)$ and so that
$y$ does not divide $p$. 

Having done so, we now
dehomogenize \eqref{E:sylrep} into
\begin{equation*}
Q(t):= p(t,1) = \sum_{k=1}^r \la_k\cos^d \theta_k(t - \tan \theta_k)^d.
\end{equation*}
Since $y \nmid p(x,y)$, there are no ``zeros
at infinity'' and by Theorem 2.4, the number of linear factors of $p$
is equal to the number of 
zeros of $Q$: $\tau = Z \le C = \sigma$.
\end{proof}

For a completely self-contained proof of Corollary 2.5, see
\cite{R3}. Note that both Theorem 2.3 and Corollary 2.5 apply whether
$d$ is even or odd.

\begin{corollary}
If $p(x,y) \neq \pm \ell^{2s}$ splits as a product of $2s$ real linear forms
 and $(a,b) \in {\mathcal B}(p)$, then $(s,s) \preceq (a,b)$.
\end{corollary}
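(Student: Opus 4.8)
The plan is to apply Corollary 2.5 in both directions, exploiting the fact that a product of $2s$ real linear forms has $\tau = 2s$ real linear factors (counting multiplicity). Suppose $(a,b)\in{\mathcal B}(p)$, so that $p$ has an honest representation $p = \sum_{k=1}^{r}\mu_k\ell_k^{2s}$ with $r = a+b$, exactly $a$ positive coefficients and $b$ negative coefficients. Since $p\neq\pm\ell^{2s}$, Theorem 2.1 forces $r\ge 2$ (a single power cannot equal $p$, which has at least two distinct linear factors). Each $\ell_k$ can be written as $\cos\theta_k\,x - \sin\theta_k\,y$ up to a nonzero scalar absorbed into $\mu_k$, with the $\theta_k$ pairwise distinct modulo $\pi$; after the ``dialing'' change of variables in \eqref{E:dial}, as in the proof of Corollary 2.5, we may assume $-\tfrac{\pi}{2} < \theta_1 < \dots < \theta_r \le \tfrac{\pi}{2}$. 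This reindexing only permutes the coefficients, so the multiset of signs of $(\mu_1,\dots,\mu_r)$ — hence the count $a$ of positive and $b$ of negative ones — is unchanged.

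Now $d = 2s$ is even, so the augmented sequence in Corollary 2.5 is $(\mu_1,\dots,\mu_r,\mu_1)$, a cyclic sequence of length $r+1$ whose first and last entries agree. Let $\sigma$ be its number of sign changes. Corollary 2.5 gives $2s = \tau \le \sigma$. On the other hand, a cyclic sequence of $r$ nonzero reals with $a$ of one sign and $b$ of the other has at most $2\min(a,b)$ sign changes: every ``$+$ to $-$'' transition must be matched by a later ``$-$ to $+$'' transition to return to the starting sign, so the transitions come in pairs, and the number of each kind is bounded by the size of the smaller sign class. Hence $\sigma \le 2\min(a,b)$.

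Combining, $2s \le \sigma \le 2\min(a,b)$, so $\min(a,b)\ge s$, i.e. $a\ge s$ and $b\ge s$, which is exactly $(s,s)\preceq(a,b)$. I do not anticipate a serious obstacle here: the only point requiring care is the bookkeeping around Corollary 2.5 — making sure the ``projectivize and dial'' maneuver in its statement is compatible with tracking $a$ and $b$ separately rather than just $\sigma$, and confirming that the honest representation survives the change of variables \eqref{E:dial} (it does, since the transformation is invertible and preserves both distinctness of the summands and the coefficients $\mu_k$). The elementary lemma bounding cyclic sign changes by $2\min(a,b)$ is the combinatorial heart of the argument and is where I would be most explicit.
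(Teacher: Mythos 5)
Your proof is correct and follows essentially the same route as the paper: apply Corollary 2.5 with $\tau = 2s$ to force at least $2s$ sign changes in the cyclic coefficient sequence, and then observe that a cyclic sequence with $a$ positive and $b$ negative entries admits at most $2\min(a,b)$ sign changes. The paper states this last combinatorial step in one sentence; you simply make it explicit.
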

\begin{proof}
By Theorem 2.1, $p$ has at least one representation, necessarily of
length $\ge 2$, which can be put into the form of
\eqref{E:sylrep}. Since there are at least $2s$ sign changes, it must have at least
$s$ positive coefficients and $s$ negative coefficients.
\end{proof}

Corollary 2.6 is improved in Theorem 3.1(2). 
Note that Corollary 2.6 applies to non-standard representations
of $\ell^{2s}$ such as \eqref{E:4diff}.

The final tool involves earlier work  \cite{R1} of the author.
Define
\begin{equation*}
Q_{n,2s}^r = \biggl\{ \sum_{k=1}^r \bigl(\xi_{k1}x_1 + \dots +
\xi_{kn}x_n\bigr)^{2s}: \ \xi_{kj} \in \mathbb R \biggr\},\qquad Q_{n.2s} =
\bigcup_{r=1}^\infty Q_{n,2s}^r. 
\end{equation*}
\begin{theorem}\cite[pp. 36,37]{R1}
For any $(n,2s,r)$, $Q_{n,2s}^r$ is a closed set, and $Q_{n,2s}$ is a
closed convex cone. 
\end{theorem}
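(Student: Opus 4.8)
The plan is to prove the two assertions separately, since they are rather different in flavor. For the closedness of $Q_{n,2s}^r$, the key observation is that the map
\[
\Phi : (\rr^n)^r \to F_{n,2s}, \qquad (\xi_1,\dots,\xi_r) \mapsto \sum_{k=1}^r (\xi_{k1}x_1+\dots+\xi_{kn}x_n)^{2s}
\]
is a continuous polynomial map whose image is exactly $Q_{n,2s}^r$. Closedness does not follow merely from continuity, so the hard part here is a compactness/coercivity argument: if $p_m = \Phi(\xi^{(m)}) \to p$, I want to extract a convergent subsequence of the $\xi^{(m)}$. The crucial point is that, because $2s$ is even, every summand $(\xi_{k1}x_1+\dots+\xi_{kn}x_n)^{2s}$ is a nonnegative form, so on any fixed point $v$ of the unit sphere $S^{n-1}$ we have $0 \le (\xi_k\cdot v)^{2s} \le p_m(v)$, which is bounded since $p_m \to p$. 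Integrating (or summing over a spanning finite set of evaluation points $v$) gives a uniform bound $\sum_k \|\xi_k^{(m)}\|^{2s} \le C$, so the $\xi^{(m)}$ lie in a fixed compact set; passing to a convergent subsequence and using continuity of $\Phi$ shows $p \in Q_{n,2s}^r$. One technical wrinkle is that a linear form $\xi_k\cdot x$ and its negative give the same $2s$-th power, but this causes no trouble since we may always normalize, say, the first nonzero coordinate to be nonnegative.

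For the second assertion, $Q_{n,2s} = \bigcup_{r\ge 1} Q_{n,2s}^r$ is obviously a cone (scaling $p$ by $t>0$ replaces each $\xi_k$ by $t^{1/(2s)}\xi_k$) and is convex (a representation of $p_1$ with $r_1$ summands and one of $p_2$ with $r_2$ summands concatenate to a representation of $p_1+p_2$ with $r_1+r_2$ summands, and $\la p_1 + (1-\la)p_2$ is handled by first rescaling). The nontrivial part is closedness of the union. Here I would invoke Carathéodory's theorem for convex cones: since $F_{n,2s}$ is a finite-dimensional real vector space of dimension $N = \binom{n+2s-1}{2s}$, every element of the convex cone generated by the $2s$-th powers is already a nonnegative combination of at most $N$ of them, i.e. $Q_{n,2s} = Q_{n,2s}^N$. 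Combined with the first part (applied with $r=N$), this single set is closed, hence so is $Q_{n,2s}$.

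The main obstacle I anticipate is making the coercivity bound in the first part fully rigorous: one must ensure the finitely many evaluation functionals $p \mapsto p(v_i)$ can be chosen to be \emph{linearly independent} on $F_{n,2s}$ (so that convergence of the coefficients of $p_m$ forces convergence of all the $p_m(v_i)$), and simultaneously so that $v \mapsto (\xi\cdot v)^{2s}$ being small at all the $v_i$ forces $\|\xi\|$ small — i.e. the $v_i$ should not all lie in a common hyperplane through the origin. Choosing the $v_i$ on the sphere in sufficiently general position handles both requirements at once. Once this is in place, the rest is the routine compactness argument sketched above together with the elementary cone/convexity verifications and the citation of Carathéodory.
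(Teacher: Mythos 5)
Your proof is correct, and since the paper states this theorem only as a citation of \cite{R1} without reproducing a proof, the relevant comparison is with the argument there, which yours essentially matches: coercivity of the even-power map (boundedness of $\sum_k\|\xi_k^{(m)}\|^{2s}$ from positivity of each summand) gives closedness of $Q_{n,2s}^r$ by compactness, and Carath\'eodory's theorem for cones collapses $Q_{n,2s}$ to the single closed set $Q_{n,2s}^N$ with $N=\dim F_{n,2s}$. The ``main obstacle'' you anticipate at the end is actually a non-issue: evaluation at a point is a linear, hence continuous, function of the coefficients for \emph{any} choice of points, and taking the $v_i$ to be the standard basis vectors already yields $\sum_k (\xi^{(m)}_{ki})^{2s}\le p_m(e_i)$, which bounds every coordinate after summing over $i$.
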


The topology here is the usual one, of coefficientwise convergence.
There is a classical inner product on forms under which the
dual cone to $Q_{n,2s}$ is $P_{n,2s}$, the cone of positive
semidefinite forms of degree $2s$ in $n$ variables. 
(See \cite{R1,R2} for an extensive discussion of this inner product.)
Let  $\Sigma_{n,2s}$ denote the cone of sums of squares of polynomials of
degree $s$. Hilbert proved that $\Sigma_{n,2s} = P_{n,2s}$
only for $n=2, 2s=2$ and  $(n,2s) = (3,4)$. In these cases, the dual cone to 
$Q_{n,2s}$ is $\Sigma_{n,2s}$, affording a decisive condition for
membership in $Q_{n,2s}$. 

We restrict our attention to $F_{2,2s}$. We associate to 
$p(x,y) = \sum_{j=0}^{2s} \binom {2s}j a_j x^{2s-j}y^j$ 
the {\it catalecticant} quadratic form
\begin{equation*}
H_p(t) = H_p(t_0,\dots,t_s):= \sum_{i=0}^s \sum_{j=0}^s a_{i+j}t_it_j.
\end{equation*}
(The matrix for $H_p$ corresponds to $(d,r) = (2s,s)$ in \eqref{E:hankel}.) Observe that
\begin{equation} \label{E:catt}
p(x,y) = \sum_{k=1}^r \la_k(\al_k x + \be_k y)^{2s} \iff
H_p(t) = \sum_{k=1}^r \la_k \left(\sum_{i=0}^s  \al_k^{s-i}\be_k^{i} t_i\right)^2.
\end{equation} 

If $f(x,y) = \sum_i\binom di a_i x^{d-i}y^i$ and $g(x,y) =
\sum_i\binom di b_i x^{d-i}y^i$, then the inner product is defined to be
$[f,g] = \sum_i \binom di a_ib_i$.
In particular, if $L(t_0,\dots,t_n;x,y) = \sum_{i=0}^s t_i x^{s-i}y^i$,
then $H_p(t) = [p,L^2]$. 
\begin{lemma}
If $p \in F_{2,2s}$, $\al\de - \be\ga \neq 0$ and $q(x,y) = p(\al x +
\be y, \ga x + \de y)$, then $S(H_q) = S(H_p)$.
\end{lemma}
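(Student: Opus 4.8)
The plan is to show that an invertible change of variables on $p$ induces an invertible linear change of variables on the coordinates $t_0,\dots,t_s$ of the catalecticant form, so that $H_q$ is (up to this substitution) the same quadratic form as $H_p$; then Sylvester's Law of Inertia (Theorem 2.2) gives $S(H_q) = S(H_p)$. Concretely, I would start from the identity $H_p(t) = [p, L^2]$ where $L = L(t_0,\dots,t_s;x,y) = \sum_{i=0}^s t_i x^{s-i}y^i$, and exploit the fact that the inner product $[\,\cdot\,,\,\cdot\,]$ transforms predictably under a linear substitution in $(x,y)$.

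First I would record how $L$ behaves under the substitution $(x,y) \mapsto (\al x + \be y, \ga x + \de y)$: the form $L(t_0,\dots,t_s;\al x+\be y,\ga x+\de y)$ is again a form of degree $s$ in $(x,y)$ whose coefficients are linear in $t_0,\dots,t_s$, so it equals $L(u_0,\dots,u_s;x,y)$ for a suitable $u = M t$, where $M$ is an $(s+1)\times(s+1)$ matrix depending only on $\al,\be,\ga,\de$. Since $\al\de-\be\ga\neq 0$, the substitution $(x,y)\mapsto(\al x+\be y,\ga x+\de y)$ is invertible, hence so is the induced map $t \mapsto Mt$; indeed $M$ is (a symmetric-power of) the $2\times 2$ matrix $\left(\begin{smallmatrix}\al & \be\\ \ga & \de\end{smallmatrix}\right)$ and $\det M = (\al\de-\be\ga)^{\binom{s+1}{2}} \neq 0$. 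The second ingredient is the behavior of the bracket under a change of variables in the second slot: if $g(x,y) = f(\al x+\be y,\ga x+\de y)$ then $[g, h] = [f, \widetilde h]$ where $\widetilde h$ is obtained from $h$ by the transpose/adjugate substitution. I would combine these to get
\begin{equation*}
H_q(t) = [q, L(t)^2] = [p(\al x+\be y,\ga x+\de y), L(t;x,y)^2]
= [p, L(t; \text{adjugate substitution})^2] = [p, L(Nt)^2] = H_p(Nt)
\end{equation*}
for an invertible $(s+1)\times(s+1)$ matrix $N$ built from the adjugate of $\left(\begin{smallmatrix}\al & \be\\ \ga & \de\end{smallmatrix}\right)$.

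Having established $H_q(t) = H_p(Nt)$ with $N$ invertible, the conclusion is immediate: an invertible linear change of variables does not change the signature of a quadratic form, by the standard diagonalization argument behind Theorem 2.2. Since $H_q$ is obtained from $H_p$ by such a change, $S(H_q) = S(H_p)$. (Equivalently, one may argue at the level of matrices: the catalecticant matrix of $q$ is $N^{\mathsf T}\cdot(\text{catalecticant matrix of }p)\cdot N$, a congruence, which preserves the inertia.)

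The main obstacle is purely bookkeeping: verifying that the substitution in the second slot of $[\,\cdot\,,\,\cdot\,]$ is indeed realized by an invertible linear map on the $t_i$, and pinning down exactly which matrix ($M$, $N$, or their adjugate/transpose) appears — i.e. checking that the bracket's "change of variables in one argument" rule is exactly the symmetric power of the relevant $2\times 2$ matrix. This is the content of the apolarity/transvection formalism in \cite{R1,R2}; alternatively it can be checked directly by tracking how $[p,\ell^{2s}]$ evaluates $p$ at (a multiple of) the dual point of $\ell$, which makes the transformation rule transparent. Once that identity is in hand, invoking Theorem 2.2 finishes the proof with no further computation.
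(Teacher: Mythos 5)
Your proposal is correct and follows essentially the same route as the paper: both express $H_p(t)=[p,L^2]$, use the contravariance of the inner product under an invertible substitution to get $H_q(t)=H_p(\tilde t)$ with $\tilde t$ an invertible linear image of $t$ (the paper realizes this via $\tilde L(t;x,y)=L(t;\al x+\ga y,\be x+\de y)$, i.e.\ the transposed substitution you anticipated), and then invoke the invariance of the signature of a quadratic form under invertible linear changes of variables. The ``bookkeeping'' you defer is exactly what the paper handles by citing the contravariant property \cite[Thm.~2.15]{R1}, so nothing essential is missing.
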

\begin{proof}
Define $\tilde t_i$ by
\begin{equation*}
\sum_{i=0}^s \tilde t_i x^{s-i}y^i = \sum_{i=0}^s t_i (\al x + \ga
y)^{s-i}(\be x + \de y)^i;
\end{equation*}
the $\tilde t_i$'s are linear functions of the $t_i$'s. Now let
\begin{equation*}
\tilde L(t_0,\dots,t_s;x,y) := L(t_0,\dots,t_s;\al x + \ga y, \be x
+ \de y) = L(\tilde t_0,\dots,\tilde t_n; x,y)
\end{equation*}
 By the contravariant property of the inner product
(\cite[Thm.2.15]{R1}), $H_q(t) = [q,L^2] = [p,\tilde L^2] = H_p(\tilde t)$. 
so that $S(H_q) \preceq S(H_p)$. The change of variables from $p$ to $q$ is 
invertible, so $S(H_p) \preceq S(H_q)$ as well.
\end{proof}

\begin{theorem}\cite[pp.41,61]{R1}
Suppose $p \in F_{2,2s}$ and $rank(H_p) = w$.
\begin{enumerate}
\item We have $p \in Q_{2,2s}$ if and only if $H_p(t)$ is psd.

\item If $p \in Q_{2,2s}$ and $w \le s$, then $p$ can be written
  uniquely as an honest sum of $w$ $2s$-th powers. 

\item If  $p \in Q_{2,2s}$ and $w = s+1$, then for each $(\al, \be)
  \neq (0,0)$, there exists $\la > 0$ so that  $p$ can be written as a
  sum of $s+1$ $2s$-th powers,   one of which is $\la(\al x + \be y)^{2s}$.
\end{enumerate}
 \end{theorem}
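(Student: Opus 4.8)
The three parts are really a single story told through the correspondence \eqref{E:catt}, which converts a representation of $p$ as a sum of $2s$-th powers into a representation of $H_p$ as a sum of squares of the particular linear forms $\sum_i \al_k^{s-i}\be_k^i t_i$ — and, crucially, conversely, since by Theorem 2.1 every tuple $(\al_k^{s-i}\be_k^i)_i$ arising from a point $[\al_k:\be_k]$ of $\mathbb P^1$ is of ``Veronese'' type and any sum-of-squares decomposition of $H_p$ whose linear forms are all of this Veronese type pulls back to a powers decomposition of $p$. So the plan is: first establish part (1) by citing Theorem 2.9 (Hilbert: $\Sigma_{2,2s}=P_{2,2s}$) together with this correspondence, then prove parts (2) and (3) by a rank/geometry argument on the quadratic form $H_p$ using its isotropic structure.

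For part (1): if $p\in Q_{2,2s}$ then each $\la_k>0$ in a powers representation, so by \eqref{E:catt} $H_p$ is a sum of squares of real linear forms, hence psd. Conversely, suppose $H_p$ is psd. By Hilbert's theorem (Theorem 2.9, the case $n=2$, $2s$ replaced by the degree of $H_p$ as a form in $s+1$ variables — here one uses that $P_{s+1,2}=\Sigma_{s+1,2}$, which is just the diagonalization of psd quadratic forms) $H_p$ is a sum of squares of linear forms. But I need those linear forms to be of Veronese type. The right tool is duality: the dual cone of $Q_{2,2s}$ is $P_{2,2s}$ under the inner product $[\cdot,\cdot]$, and $[p,\ell^{2s}]=\ell$ evaluated appropriately shows $p\in Q_{2,2s}$ iff $[p,q]\ge 0$ for all $q\in P_{2,2s}$. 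Writing $q=\sum c_i c_j x^{2s-i-j}y^{\dots}$ in terms of its own catalecticant, one checks $[p,q]$ is exactly $H_p$ evaluated as a bilinear pairing against the Gram data of $q$; since every psd binary form $q$ of degree $2s$ has psd catalecticant and conversely, $[p,q]\ge 0$ for all such $q$ becomes ``$H_p$ pairs nonnegatively against all psd $(s+1)$-variable quadratic forms of rank $\le s+1$ supported on the rational normal curve,'' and a density/closedness argument (Theorem 2.8) upgrades this to $H_p$ psd. This identification of the two dual cones is stated in the excerpt as ``the dual cone to $Q_{n,2s}$ is $\Sigma_{n,2s}$ [here $=P_{n,2s}$]'', so I would lean on that and \cite{R1} rather than redo it.

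For parts (2) and (3), assume $p\in Q_{2,2s}$, so $H_p$ is psd of rank $w$. Diagonalize: $H_p = \sum_{j=1}^{w} L_j(t)^2$ for some linearly independent real linear forms $L_j$ in $t_0,\dots,t_s$. The kernel $K=\{t: H_p(t)=0\}$ has dimension $s+1-w$. A powers representation $p=\sum_{k=1}^r \la_k(\al_k x+\be_k y)^{2s}$ corresponds, via \eqref{E:catt}, to a psd decomposition $H_p=\sum\la_k v_k v_k^{\mathsf T}$ where $v_k=(\al_k^{s-i}\be_k^i)_i$ lies on the rational normal curve $\Ga=\{(\al^{s-i}\be^i)_i:[\al:\be]\in\mathbb P^1\}\subset\mathbb R^{s+1}$. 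Conversely, any psd Gram decomposition of the matrix of $H_p$ whose rank-one summands are positive multiples of squares of points of $\Ga$ pulls back to a genuine honest representation of $p$ (distinctness of the $[\al_k:\be_k]$ matches honesty). So the question becomes: in how many ways, and with what minimality, can the psd matrix of $H_p$ be written $\sum_k \la_k v_kv_k^{\mathsf T}$ with $\la_k>0$ and $v_k\in\Ga$? When $w\le s$: the column space $V$ of $H_p$ is a $w$-dimensional subspace; any such $v_k$ must lie in $V$, and $V\cap\Ga$ is the zero set of the $s+1-w\ge 1$ linear forms in $K^\perp$... wait — here is where the Sylvester machinery (Theorem 2.3) enters directly: the vectors of $K$ are exactly the coefficient vectors $(c_0,\dots,c_s)$ of the Sylvester forms $h(x,y)=\sum c_j x^{s-j}y^j$ of $p$ with $r=s$, by the Hankel condition \eqref{E:hankel}. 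A nonzero element of $K$ gives a degree-$s$ form $h$; if $h$ has distinct real roots, Theorem 2.3 yields a representation $p=\sum_{k=1}^s\la_k(\al_k x+\be_k y)^{2s}$, and since $p\in Q_{2,2s}$ forces all $\la_k>0$ by \eqref{E:catt} and psd-ness, this is the required honest sum of $w=s$ (or fewer) powers. Uniqueness: any honest representation of length $\le s$ gives, via \eqref{E:catt}, $w$ vectors $v_k\in\Ga$ spanning $V$; the annihilator of $V$ is then a $1$-dimensional space of degree-$s$ Sylvester forms whose roots are forced to be exactly the $[\al_k:\be_k]$, pinning down the representation. The main obstacle in part (2) is ruling out that $h$ could have a repeated or complex root (which would break Theorem 2.3's hypothesis); this is handled by a rank argument — if $w\le s$ then $\dim K\ge 1$, and one shows the generic, indeed the only, element of $K$ (up to scalar) must have $s$ distinct real roots, because a repeated root would force $\rank H_p<w$ and a complex-conjugate pair would, after pairing against a suitable psd form, contradict psd-ness of $H_p$; I expect this case analysis to be the delicate heart of the proof. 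For part (3), $w=s+1$ means $H_p$ is positive definite; fix $(\al,\be)\ne(0,0)$ with Veronese vector $v$, choose $\la>0$ small enough that $H_p-\la\, vv^{\mathsf T}$ is still psd but now singular (such $\la$ exists and is unique since $v\ne 0$ and $H_p\succ 0$: take $\la = 1/(v^{\mathsf T}H_p^{-1}v)$), reducing to the rank-$s$ case (2) for the form $p-\la(\al x+\be y)^{2s}$, which lies in $Q_{2,2s}$ and has catalecticant of rank exactly $s$; apply part (2) to get the remaining $s$ powers.
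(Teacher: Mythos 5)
The paper offers no proof of this theorem --- it is quoted from \cite{R1} --- so the only question is whether your argument stands on its own. Parts (1) and (3) are essentially sound. For (1), the forward direction is \eqref{E:catt}, and the duality argument you eventually settle on for the converse (the identity $H_p(t) = [p,L^2]$, Hilbert's $P_{2,2s}=\Sigma_{2,2s}$ for binary forms, and biduality of the closed convex cone $Q_{2,2s}$, which is Theorem 2.7, not 2.8) is the right one, though your paragraph wanders before getting there. For (3), choosing $\la = 1/(v^{\mathsf T}H_p^{-1}v)$ so that $H_p - \la vv^{\mathsf T}$ is psd and singular, invoking (1) to place $p - \la(\al x+\be y)^{2s}$ in $Q_{2,2s}$, and then applying (2), is clean and correct.

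The genuine gap is in part (2), at exactly the spot you call ``the delicate heart.'' You want to manufacture the representation from a kernel vector of $H_p$, which requires knowing that the kernel contains a degree-$s$ form with $s$ distinct real roots (or that the common divisor of the kernel is a product of $w$ distinct real linear forms). Neither of your proposed reasons works as stated: when $w<s$ the kernel has dimension $s+1-w\ge 2$, so there is no ``only element of $K$ up to scalar,'' and most of its elements do \emph{not} have all real roots (the kernel is $g\cdot F_{2,s-w}$, and the cofactor may be taken to be a power of $x^2+y^2$); and ``a repeated root would force the rank of $H_p$ below $w$'' is unjustified. Establishing the real-root structure of the kernel of a psd Hankel matrix from scratch is essentially the content of the theorem, so your route is close to circular. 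The repair is to use the definition of $Q_{2,2s}$: membership already hands you an honest representation $p=\sum_{k=1}^r \mu_k(\al_k x+\be_k y)^{2s}$ with $\mu_k>0$, and by \eqref{E:catt} together with Theorem 2.1 (Vandermonde) the rank of $H_p$ equals $\min(r,s+1)$; hence $w\le s$ forces $r=w$ and existence is free. Uniqueness then follows from your own counting: the Veronese vectors of any honest positive representation span the column space $V$ of $H_p$, and a $w$-dimensional $V$ contains at most $w$ Veronese points (an $(s+1-w)$-dimensional space of degree-$s$ forms cannot have $w+1$ common zeros), so the points, and then the coefficients, are determined.
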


If $p \in Q_{2,2s}$, $w=rank(H_p)$ is called the {\it width} of $p$.
Uniqueness in Theorem 2.9(2) is only asserted within the class of representations with
positive coefficients; c.f. \eqref{E:4diff}.
\begin{corollary}
Suppose $p \in F_{2,2s}$ and suppose $S(H_p) =(p_H,n_H)$. 
\begin{enumerate}
\item If  $(a,b) \in {\mathcal B}(p)$, then $(p_H,n_H) \preceq (a,b)$.
\item If $p \in Q_{n,2s}$, then $S(p) = S(H_p) =(p_H,n_H) = (rank(H_p),0)$.  
 \item If  $(a,b) \in {\mathcal B}(p)$ is such that $a+b = p_H + n_H =
   rank(H_p)$,  then the signature of $p$ is $S(p) = S(H_p)$. 
\end{enumerate}
\end{corollary}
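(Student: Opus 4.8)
The three parts build on one another, with everything resting on the catalecticant dictionary \eqref{E:catt} together with Sylvester's Law of Inertia (Theorem 2.2). For part (1), I would take an honest representation of $p$ with badge $(a,b)$ and push it through \eqref{E:catt}: this produces a representation of the quadratic form $H_p$ as a linear combination of $a+b$ squares of linear forms in $t_0,\dots,t_s$, with exactly $a$ positive and $b$ negative coefficients (the linear forms appearing need not be distinct, which is irrelevant here). Running this representation against a diagonalization of $H_p$ through the inertia argument used to prove Theorem 2.2 shows that the diagonalization badge $S(H_p)=(p_H,n_H)$ satisfies $(p_H,n_H)\preceq(a,b)$.

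For part (2): if $p\in Q_{2,2s}$, then $H_p$ is psd by Theorem 2.9(1), so $n_H=0$ and $p_H=\operatorname{rank}(H_p)=w$, giving $S(H_p)=(w,0)$. By part (1) every badge of $p$ dominates $(w,0)$, so it suffices to exhibit one honest representation of badge $(w,0)$: that badge is then automatically the unique minimal element of ${\mathcal B}(p)$, and $S(p)=(w,0)=S(H_p)$. If $w\le s$, Theorem 2.9(2) provides $p$ as an honest sum of $w$ $2s$-th powers, which has badge $(w,0)$. If $w=s+1$, Theorem 2.9(3) provides $p=\sum_{k=1}^{s+1}\mu_k m_k^{2s}$ with all $\mu_k>0$; I would show this is automatically honest, because otherwise --- some $m_k=0$, or two of the forms $m_k$ proportional --- combining terms would give an honest representation with at most $s$ positive coefficients, a badge $(s',0)$ with $s'\le s$, contradicting $(s+1,0)=S(H_p)\preceq(s',0)$ from part (1).

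For part (3): given $(a,b)\in{\mathcal B}(p)$ with $a+b=p_H+n_H=\operatorname{rank}(H_p)$, part (1) gives $p_H\le a$ and $n_H\le b$; summing these and comparing with $a+b=p_H+n_H$ forces $a=p_H$ and $b=n_H$. Hence $(p_H,n_H)\in{\mathcal B}(p)$, and by part (1) it lies $\preceq$ every badge of $p$, so it is the unique minimal badge; therefore $S(p)=(p_H,n_H)=S(H_p)$.

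The only step that is more than bookkeeping on top of \eqref{E:catt} and Theorem 2.2 is the honesty claim in part (2) when $w=s+1$, and I expect this to be the main --- though modest --- obstacle. The reduction to part (1) indicated above handles it; alternatively, one observes directly that a sum of at most $s$ distinct $2s$-th powers forces $\operatorname{rank}(H_p)\le s$ via \eqref{E:catt}, which is incompatible with $w=s+1$.
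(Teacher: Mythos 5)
Your proposal is correct and follows essentially the same route as the paper: transport a badge-$(a,b)$ representation of $p$ to one of $H_p$ via \eqref{E:catt} and invoke the inertia argument for (1), use Theorem 2.9(2)(3) to put $(\operatorname{rank}(H_p),0)$ in ${\mathcal B}(p)$ for (2), and compare sums for (3). The only difference is that you spell out the honesty of the width-$(s+1)$ representation in part (2), a detail the paper leaves implicit; your argument for it is sound.
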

\begin{proof}
If $p$ has a representation with badge $(a,b)$, then \eqref{E:catt}
implies that $H_p$ has a representation with the same badge. Quadratic
forms have unique signatures by Theorem 2.1, so (1) is immediate.
If $p \in Q_{2,2s}$, then by Theorem 2.9(2)(3), $(rank(H_p),0) \in
{\mathcal B}(p)$, hence by (1) it is the unique minimal element. 
Finally if $(a,b)
\in {\mathcal B}(p)$ with $a+b = rank(H_p)$, then (1) implies that 
$(a,b) = (r_H,n_H)$ and $(a,b)$ is the unique minimal badge in ${\mathcal B}(p)$.
\end{proof}

The practical significance of Corollary 2.10(3) is that Theorem 2.3
might already show
that the length of $p$ is equal to the rank of $H_p$. In such a case,
we know that the signature of $p$ is $S(H_p)$ without having to
compute an actual representation.

If $s > 1$, then  $(p_H,n_H)$ need not belong to ${\mathcal B}(p)$. 
 By Corollary 2.6, if
$p \neq \pm \ell^{2s}$ splits into $2s$ linear
factors, then any $(a,b) \in{\mathcal B}(p)$ will have $a+b \ge 2s$,
but $r_H +n_H \le s+1$.

We also remark (see \cite[p.124]{R1}) that $(x^2 + y^2)^s$ always has
width $s+1$, and so has signature $(s+1,0)$. Theorem 2.9(3) is
illustrated by the identity 
\begin{equation*}
\binom{2s}s (x^2+y^2)^s = \frac 1{s+1} \sum_{k=0}^s
\left(\cos(\tfrac{k\pi}{s+1}+ \theta) x +  \sin(\tfrac{k\pi}{s+1}+
  \theta) y\right)^{2s}, \qquad \theta \in \mathbb R.
\end{equation*}

Finally, it is easy to construct  jump signature sequences in
$Q_{n,2s}$ and $F_{n,2}$ in which the jump is down. If
\begin{equation*}
f_m = x^{2s} + \tfrac 1m (x^2 + y^2)^s, 
\end{equation*}
then $S(f_m) = (s+1,0)$ and $f_m \to f$, where $S(f) = (1,0)$. 
Since each $Q_{2,2s}^r$ is closed, any jump in $Q_{2,2s}$ will be down.

For quadratic forms, only downward jumps are possible, but every
feasible downward jump occurs. Suppose $a_i,b_i \ge 0$ and $a_1+a_2+b_1+b_2 \le n$. Let
\begin{equation*}
f_m(x_1, \dots, x_n) = \sum_{i=1}^{a_1} x_i^2 + \tfrac 1m
\sum_{i=a_1+1}^{a_1+a_2} x_i^2 - \sum_{i=a_1+a_2+1}^{a_1+a_2+b_1}
x_i^2 - \tfrac 1m \sum_{i=a_1+a_2+b_1+1}^{a_1+a_2+b_1+b_2} x_i^2. 
\end{equation*}
Then $S(f_m) = (a_1+a_2,b_1+b_2)$ and $f_m \to f$ where $S(f) =
(a_1,b_1)$. On the other hand, for $f \in F_{n,2}$, $S(f)$ is the
number of positive and negative roots (counting multiplicity) of
$\phi_f(t)$, the characteristic polynomial of matrix assoicated to $f$. 
Since all roots of $\phi_f(t)$ are real, and since the
roots of a polynomial are continuous functions of its coefficients, it
follows that if $S(f_m) = (a,b)$ and $f_m \to f$, then $S(f) \preceq (a,b)$.

\section{Signatures}

In this section, we determine all possible 
signatures in $F_{2,2s}$ and we find some restrictions on incomparable
signatures. Let $[0,s]^2 = \{(i,j) \in \mathbb Z^2:0 \le i,j \le s\}$.

\begin{theorem}
Fix $s \ge 1$.
\begin{enumerate}
\item  If $p \in F_{2,2s}$, then ${\mathcal B}(p) \subseteq
  \{(s+1,0),(0,s+1)\} \cup [0,s]^2$.  
\item If $p(x,y) \neq \pm \ell^{2s}$ is a product of $2s$ real linear factors,
  then $S(p) = (s,s)$.  
\item If $(u,v) \in \{(s+1,0),(0,s+1)\} \cup [0,s]^2$, then there exists
$p \in F_{2,2s}$ such that $S(p) = (u,v)$.
\end{enumerate}
\end{theorem}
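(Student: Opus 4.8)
The plan is to handle the three parts in order, using Corollary 2.5, Corollary 2.10, and Theorem 2.9 as the main engines.

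For part (1), I would start from the representation $p(x,y) = \sum_{j=0}^{2s}\la_j(x+jy)^{2s}$ guaranteed by Theorem 2.1, and more generally consider any honest representation. After an invertible change of variables (which preserves $\mathcal B(p)$), write any honest representation in the Sylvester normal form $p = \sum_{k=1}^r \la_k(\cos\theta_k\, x - \sin\theta_k\, y)^{2s}$ with $-\tfrac\pi2 < \theta_1 < \dots < \theta_r \le \tfrac\pi2$. If the representation has badge $(a,b)$ with $a \ge 1$ and $b \ge 1$, I want to show that one can find a representation with badge $\preceq (s,s)$, hence no minimal badge exceeds $(s,s)$ componentwise in that regime; the remaining possibility is a representation with all coefficients of one sign, i.e. $p \in Q_{2,2s}$ (up to sign), and then Corollary 2.10(2) together with the fact that $\mathrm{rank}(H_p) \le s+1$ forces the badge $(s+1,0)$ or $(0,s+1)$. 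The genuinely new content is the bound $(a,b)\preceq(s,s)$ when both signs appear: here I would invoke the catalecticant, since $S(H_p) = (p_H,n_H)$ with $p_H + n_H = \mathrm{rank}(H_p) \le s+1$, so at least one of $p_H, n_H$ is $\le s$; combined with Corollary 2.10(1), a short case analysis on whether $p_H \le s$ and $n_H \le s$ pins the signature into $[0,s]^2 \cup \{(s+1,0),(0,s+1)\}$. I expect the careful bookkeeping of which badges can be ``honest'' (distinctness of summands) to be the fussiest point, but not a real obstacle.

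For part (2), suppose $p \neq \pm\ell^{2s}$ splits into $2s$ real linear factors. Corollary 2.6 already gives $(s,s)\preceq(a,b)$ for every $(a,b)\in\mathcal B(p)$, so it suffices to exhibit one honest representation with badge exactly $(s,s)$. I would do this by a direct construction: after a change of variables, $p$ has at least two distinct linear factors, and I can take a Sylvester form $h$ of degree $s+1$ (or cleverly of degree $s$) whose roots interlace the roots of $p$ so that the resulting coefficient sequence in Theorem 2.4 has exactly $2s$ sign changes — forcing exactly $s$ positive and $s$ negative $\la_k$. Concretely, choose $h$ to vanish at $s+1$ points strictly separating the $2s$ roots of $p$ (thought of on the projective line $\mathbb{RP}^1$), apply Theorem 2.3 to get a representation of length $s+1$, and check via Corollary 2.5 that $\tau = 2s$ forces $\sigma = 2s$ on the length-$(s+1)$ coefficient sequence, which yields badge $(s,s)$ after discarding. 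Since $(s,s)$ is the minimum allowed by Corollary 2.6 and it is achieved, it is the unique signature.

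For part (3), I would exhibit an explicit $p$ for each target signature. For $(s+1,0)$ take $p = (x^2+y^2)^s$, which has width $s+1$ by the remark after Corollary 2.10, so $S(p) = (s+1,0)$; symmetrically $-(x^2+y^2)^s$ gives $(0,s+1)$. For a target $(u,v) \in [0,s]^2$, the natural candidate is $p = \sum_{i=1}^u \ell_i^{2s} - \sum_{j=1}^v m_j^{2s}$ for generic linear forms $\ell_i, m_j$: this representation is honest and has badge $(u,v)$, so $S(p) \preceq (u,v)$, and I must rule out any strictly smaller badge. Here I would compute $H_p = \sum \ell_i$-squares $- \sum m_j$-squares in the $(s+1)$-variable catalecticant space; for generic choices these $u+v \le 2s$ squares of degree-$s$ evaluation forms are linearly independent (Theorem 2.1 applied in the $s$-variable guise), so $S(H_p) = (u,v)$ exactly, and Corollary 2.10(1) forces $(u,v) \preceq$ every badge of $p$. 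Hence $S(p) = (u,v)$.

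I expect the main obstacle to be part (2): producing a Sylvester form of the right degree whose roots interlace those of $p$ on $\mathbb{RP}^1$ and verifying that the sign-change count comes out to exactly $2s$ requires an honest interlacing argument (and attention to the cyclic nature of the sign sequence in Corollary 2.5, and to the possible ``zero at infinity''). Parts (1) and (3) should reduce to the catalecticant bound plus a genericity check, both of which are routine given the tools assembled in Section 2.
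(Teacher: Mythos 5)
Your proposal has genuine gaps in all three parts, and the common root is a misuse of the catalecticant. Corollary 2.10(1) says $S(H_p)\preceq (a,b)$ for every badge $(a,b)$: it is a \emph{lower} bound on badges. In part (1) you try to use it to conclude that minimal badges lie in $[0,s]^2\cup\{(s+1,0),(0,s+1)\}$, i.e.\ to get an \emph{upper} bound; that direction simply is not available from $H_p$ (a product of $2s$ distinct real linear factors has $\mathrm{rank}(H_p)\le s+1$ but signature $(s,s)$ with $a+b=2s$, so the catalecticant badly underestimates minimal badges). Moreover your intermediate claim --- ``if a representation has both signs, there is a representation with badge $\preceq(s,s)$'' --- is false as stated, e.g.\ for $(x^2+y^2)^s$, whose every badge has $a\ge s+1$. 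The missing idea (the paper's) is to split an honest representation realizing a minimal badge as $p=p_+-p_-$ with $p_\pm\in Q_{2,2s}$, re-represent each of $p_+$ and $p_-$ as a sum of at most $s+1$ powers using Theorem 2.9(2),(3) (minimality then gives $a,b\le s+1$), and then use the freedom in Theorem 2.9(3) to rotate one summand of a width-$(s+1)$ part onto a summand of the opposite part and cancel, which rules out $(s+1,b)$ with $b\ge 1$. The same defect sinks part (3) for $u+v>s+1$: your $u+v$ evaluation forms live in $s+1$ variables, so for, say, $(u,v)=(s,s)$ with $s\ge 2$ they cannot be linearly independent, $S(H_p)\ne(u,v)$, and Corollary 2.10(1) no longer certifies minimality. (For $u+v\le s+1$ your genericity argument does work.) The paper instead takes the $u$ positive and $v$ negative summands from a $(s,s)$-representation of a full split product $\psi$, so that any competing representation of $\psi_{u,v}$ can be augmented by the missing summands to a representation of $\psi$, to which Corollary 2.6 applies.

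Part (2) contains an outright impossibility: a Sylvester form of degree $s+1$ produces a representation of length $s+1$, but Corollary 2.6 forces every badge of a split $p\ne\pm\ell^{2s}$ to satisfy $a,b\ge s$, hence length $\ge 2s>s+1$ for $s\ge 2$. Equivalently, the cyclically extended coefficient sequence of length $s+1$ can have at most $s+1$ sign changes, never the $2s$ you want. No interlacing construction of that degree can exist; you would need a Sylvester form of degree $2s$. But in fact no construction is needed at all: once part (1) is in place, Corollary 2.6 gives $(s,s)\preceq(a,b)$ for all badges, part (1) caps every minimal badge at $(s,s)$ (the options $(s+1,0)$ and $(0,s+1)$ being incompatible with $(s,s)\preceq(a,b)$), and ${\mathcal S}(p)\ne\emptyset$ by Theorem 2.1, so $S(p)=(s,s)$ follows immediately.
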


\begin{proof}
Suppose $(a,b) \in {\mathcal S}(p)$ and, specifically, suppose that
\begin{equation}\label{E:pm}
p(x,y) = \sum_{i=1}^a(\al_i x + \be_i y)^{2s} -  \sum_{j=1}^b(\ga_j x
+ \de_j y)^{2s} := p_+(x,y) - p_-(x,y).
\end{equation}
Since  \eqref{E:pm}  is honest and
 $p_+, p_- \in Q_{2,2s}$,  Theorem 2.9 implies that $p_+$
 and $p_-$ can each be written as a sum of  $\le s+1$ $2s$-th
 powers. Since $(a,b)$ is minimal, $a,b \le s+1$.  

We now show that if $(s+1,b) \in {\mathcal S}(p)$, then $b=0$.
If so, by Theorem 2.9(3), we can rewrite $p_+$ as a sum of length
$s+1$, one of whose summands is $\la(\ga_1 x + \de_1 y)^{2s}$. This
cancels with the summand $(\ga_1 x + \de_1 y)^{2s}$ in $p_-$
to yield a representation with a smaller badge. 
A similar contradiction results from $(a,s+1)$ with $a>0$.

For (2), combine Corollary 2.6 and part (1).

For (3), first note that  $p(x,y) = \pm (x^2 + y^2)^s$ has signature $(s+1,0)$
or $(0,s+1)$. We now show that every $(u,v) \in [0,s]^2$ occurs.
If $\psi$ is any product of $2s$ distinct linear
factors, then  $S(\psi) = (s,s)$ by (2), and there exists a representation 
\begin{equation*}
\psi(x,y) = \sum_{j=1}^s(\al_j x + \be_j y)^{2s} - \sum_{j=1}^s (\ga_j x +
\de_j y)^{2s}.
\end{equation*}
We claim that for $0 \le u,v \le s$, 
\begin{equation*}
\psi_{u,v}(x,y) = \sum_{j=1}^u (\al_j x + \be_j y)^{2s} - \sum_{j=1}^v (\ga_j x +
\de_j y)^{2s}
\end{equation*}
has unique signature $(u,v)$. If $(\bar u, \bar v)$ is another badge
for $\psi_{u,v}$ and
\begin{equation*}
\psi_{u,v}(x,y) = \sum_{j=1}^{\bar u} (\bar \al_j x + \bar \be_j y)^{2s} -
\sum_{j=1}^{\bar v} (\bar \ga_j x +\bar \de_j y)^{2s},
\end{equation*}
then we may add the ``missing'' summands to reconstitute $\psi$:
\begin{equation*}
\psi(x,y) =  \sum_{j=1}^{\bar u} (\bar \al_j x + \bar \be_j y)^{2s} +
\sum_{j=u+1}^s (\al_j x + \be_j y)^{2s} - \sum_{j=1}^{\bar v} (\bar \ga_j x
+\bar \de_j y)^{2s} - \sum_{j=v+1}^s (\ga_j x + \de_j y)^{2s}.
\end{equation*}
Since  $(s,s) \preceq (\bar u + s - u, \bar v + s - v)$, we have $(u,v)
\preceq (\bar u, \bar v)$ as claimed.
\end{proof}

\begin{example}
The representation
\begin{equation*}
p(x,y) = 8 x^4 + 48 x^2 y^2 - 8 y^4 = (x + 2y)^4 + 6 x^4 + (x-2y)^4 -
40 y^4 \in F_{2,4}
\end{equation*}
shows that $p$ has badge $(3,1)$, which cannot be minimal by Theorem
3.1(1). Using the argument of the proof, rewrite 
$p_+$  using \eqref{E:4diff} and cancel terms:
\begin{equation*}
p(x,y) =  4(x+y)^4 + 4(x-y)^4 + 24y^4 - 40 y^4 =  4(x+y)^4 + 4(x-y)^4 -16y^4.
\end{equation*}
so $(2,1) \in {\mathcal B}(p)$. Since $H_p(t_0,t_1,t_2) = 8(t_0^2 +
t_0t_2 - t_2^2 + t_1^2)$ has signature $(2,1)$, it follows from Corollary 2.10 that
$S(p) =  (2,1)$.
\end{example} 

We now give a necessary condition for incomparable signatures.

\begin{theorem}
Suppose $p \in F_{2,2s}$ has two  signatures $(a,b)$ and $(c,d)$  such
that $a > c$ and $b < d$. Then $a+d \ge s+3$, $b+c \ge
s+1$,  $\max\{a+b,c+d\} \ge s+2$ and  $a,b,c,d \ge 1$,
\end{theorem}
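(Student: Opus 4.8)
The plan is to exploit the two representations of $p$ attached to the signatures $(a,b)$ and $(c,d)$ simultaneously, using the ``add back the missing summands'' reconstitution trick from the proof of Theorem 3.1(3), together with Corollary 2.10(1) and the minimality of signatures. Write, as in \eqref{E:pm},
\begin{equation*}
p = \sum_{i=1}^a A_i^{2s} - \sum_{j=1}^b B_j^{2s} = \sum_{i=1}^c C_i^{2s} - \sum_{j=1}^d D_j^{2s},
\end{equation*}
where each $A_i,B_j,C_i,D_j$ is a real linear form, and each representation is honest. First I would establish $a,b,c,d\ge 1$: if $b=0$ then $p\in Q_{2,2s}$, so by Corollary 2.10(2) $p$ has a \emph{unique} signature, contradicting the existence of two incomparable ones; likewise $d=0$ forces $p\in -Q_{2,2s}$ with a unique signature, and $a=0$ or $c=0$ are symmetric. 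Hence all four are $\ge 1$, and in particular $c\ge 1$, $b\ge 1$, so $a\ge 2$ and $d\ge 2$.

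Next I would prove $b+c\ge s+1$ by rearranging the two equations into a single honest-up-to-cancellation identity. From $p=\sum A_i^{2s}-\sum B_j^{2s}$ and $p=\sum C_i^{2s}-\sum D_j^{2s}$ we get
\begin{equation*}
\sum_{i=1}^c C_i^{2s} + \sum_{j=1}^b B_j^{2s} = \sum_{i=1}^a A_i^{2s} + \sum_{j=1}^d D_j^{2s},
\end{equation*}
i.e. a positive-coefficient representation of a form $g\in Q_{2,2s}$ of length $\le b+c$ on the left. If $b+c\le s$, then by Theorem 2.9(2) the representation of $g$ by $b+c$ powers is the \emph{unique} honest one with positive coefficients, hence the right-hand $(a+d)$-term positive representation must actually collapse (after merging proportional terms) to exactly those $b+c$ forms $\{B_j,C_i\}$. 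That would express the whole right side, and therefore $p = \sum A_i^{2s} - \sum D_j^{2s}$ read off it, in terms of only the forms $C_i$ (contributing negatively to $p$, since they came from the $+\sum C_i$ side moved across… ) — more carefully: each $A_i$ and each $D_j$ is proportional to some $B_j$ or some $C_i$, so $p$ itself is a linear combination of the $\le b+c$ powers $\{B_j^{2s},C_i^{2s}\}$; collecting signs gives a badge $(a',b')$ with $a'+b'\le b+c\le s$, whence by Theorem 3.1(1) and minimality $(a',b')\preceq(a,b)$ and $(a',b')\preceq(c,d)$, so $a,b,c,d$ would all be bounded by this common badge, contradicting $a>c$, $b<d$ unless the two given badges coincide. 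So $b+c\ge s+1$; by the same argument with roles of $+$ and $-$ swapped (looking at $\sum A_i^{2s}+\sum D_j^{2s}$), $a+d\ge s+1$, and the catalecticant will push this to $a+d\ge s+3$.

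For the sharper bounds I would bring in $H_p$. Let $S(H_p)=(p_H,n_H)$; by Corollary 2.10(1), $(p_H,n_H)\preceq(a,b)$ and $(p_H,n_H)\preceq(c,d)$, so $p_H\le c$ and $n_H\le b$. The point is that $(p_H,n_H)$ cannot itself be a badge of $p$ when $p$ has incomparable signatures strictly above it in both coordinates — indeed if $(p_H,n_H)\in\mathcal B(p)$ then by minimality it would be $\preceq$ every signature and, being a lower bound already, would force uniqueness; more quantitatively, the proof of Theorem 3.1(1) shows that whenever $a=p_H+1$ we can use Theorem 2.9(3) to cancel a term against the negative part and lower the badge, contradicting minimality, so $a\ge p_H+2$ whenever $b\ge 1$; symmetrically $d\ge n_H+2$ whenever $c\ge 1$. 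Since $\mathrm{rank}(H_p)=p_H+n_H\le b+c$ is finite and $H_p\not\equiv 0$ (as $p\ne 0$ and $p$ is not a single power, having length $\ge a+b\ge 3$), we may combine $a\ge p_H+2$ with $d\ge n_H+\;$(something): the honest relation $\sum A_i^{2s}+\sum D_j^{2s}=\sum C_i^{2s}+\sum B_j^{2s}$ forces $a+d\ge \mathrm{rank}$ of the catalecticant of that common form, and tracking that this rank is $\ge s+1$ while the cancellation argument contributes the extra $+2$, one gets $a+d\ge s+3$. Finally $\max\{a+b,c+d\}\ge s+2$: if both $a+b\le s+1$ and $c+d\le s+1$, add the two relations as before to get a positive representation of length $\le(a+b)+(c+d)\le 2s+2$ of the form $2p_+'$ where... — cleaner: the length of $p$ is $\le\min\{a+b,c+d\}$, and one shows using Sylvester's Theorem 2.3 that a form of length $\le s+1$ has catalecticant of rank equal to its length, giving $\mathrm{rank}(H_p)\le s+1$; but then $p_H+n_H\le s+1$ forces, via $a\ge p_H+2$ and $d\ge n_H+2$ and $b+c\ge s+1$... the arithmetic closes. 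The main obstacle will be the bookkeeping in this last combination — making the ``$+2$ from cancellation'' and the ``$\ge s+1$ from width of the common positive form'' interact correctly to yield exactly $a+d\ge s+3$ and $\max\{a+b,c+d\}\ge s+2$ rather than weaker bounds; I expect the cleanest route is to apply Theorem 2.9 and Corollary 2.10 to the \emph{two auxiliary forms} $g_1=p_++p_-'=\sum A_i^{2s}+\sum D_j^{2s}$ and $g_2=p_-+p_+'=\sum B_j^{2s}+\sum C_i^{2s}$, both in $Q_{2,2s}$ with $g_1=g_2=\;$(a fixed psd form), and read off width inequalities for each, then feed those back through the minimality-plus-cancellation step.
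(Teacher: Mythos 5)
Your central construction is the right one and matches the paper's: form the auxiliary form $q = \sum C_i^{2s} + \sum B_j^{2s} = \sum A_i^{2s} + \sum D_j^{2s} \in Q_{2,2s}$ and invoke the uniqueness statement of Theorem 2.9(2) when its width is at most $s$. But two steps are genuinely broken. First, your route from ``the two positive representations of $q$ merge to the same honest one'' to a contradiction goes through the claim that the resulting short badge $(a',b')$ of $p$ satisfies $(a',b') \preceq (a,b)$ ``by minimality.'' Minimality of $(a,b)$ only says that no badge lies strictly below it; an incomparable shorter badge is perfectly consistent with that, so no contradiction is reached. The correct finish (the paper's) is: honesty of $p = \sum A_i^{2s} - \sum B_j^{2s}$ forbids $A_i \propto B_j$, so each $A_i$ must be proportional to some $C_{k(i)}$, and pairwise distinctness of the $A_i$'s and of the $C_k$'s makes $i \mapsto k(i)$ injective, giving $a \le c$ and contradicting $a > c$. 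Hence the width of $q$ is $s+1$ and $b+c \ge s+1$.

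Second, your path to $a+d \ge s+3$ relies on the lemma ``$a \ge p_H + 2$ whenever $b \ge 1$,'' which is false: in Theorem 4.4(3) the form $q_\la$ has $(3,2)$ as a signature while $S(H_{q_\la}) = (2,2)$, so there $a = 3 = p_H + 1$. (The cancellation trick in the proof of Theorem 3.1(1) needs Theorem 2.9(3), which requires the positive part to have the maximal width $s+1$; it gives nothing at width $p_H+1$.) The bound on $\max\{a+b,c+d\}$ is likewise left open. All three remaining inequalities actually fall out of elementary arithmetic you never use: $a > c$ and $d > b$ give $a \ge c+1$ and $d \ge b+1$, so $a+d \ge (b+c)+2 \ge s+3$, and then $(a+b)+(c+d) = (a+d)+(b+c) \ge 2s+4$ forces $\max\{a+b,c+d\} \ge s+2$; no further catalecticant input is needed. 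Your derivation of $a,b,c,d \ge 1$ via Corollary 2.10(2) is correct and is a clean alternative to the paper's (which instead deduces it from $b+c\ge s+1$ and Theorem 3.1(1)).
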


\begin{proof}
Write
\begin{equation}\label{E:pm2}
\begin{gathered}
p = p_+ - p_- = \sum_{i=1}^a (\al_i x + \be_i y)^{2s} -  \sum_{i=1}^b
(\ga_i x + \de_i y)^{2s}; \\
p = \bar p_+ - \bar p_- = \sum_{i=1}^c (\bar \al_i x + \bar \be_i
y)^{2s} -  \sum_{i=1}^d 
(\bar \ga_i x + \bar \de_i y)^{2s}.
\end{gathered}
\end{equation}
We obtain two representations for $q :=  p_+ + \bar p_- = \bar p_+ + p_-$:
\begin{equation} \label{E:nonmin}
\begin{gathered}
q(x,y) =  \sum_{i=1}^a (\al_i x + \be_i y)^{2s} + \sum_{i=1}^d (\bar \ga_i x
+ \bar \de_i y)^{2s}; \\
q(x,y) =  \sum_{i=1}^b (\ga_i x + \de_i y)^{2s} + \sum_{i=1}^c (\bar
\al_i x + \bar \be_i y)^{2s}. 
\end{gathered}
\end{equation}

Observe that $q \in Q_{2,2s}$, and suppose it has width $w$.  If $w
\le s$, then the two 
representations of $q$ in \eqref{E:nonmin} must be permutations of
each other by Theorem 2.9(2).
 Thus each of the summands $(\al_i x + \be_i y)^{2s}$ must appear in the second
representation. Since the representations in \eqref{E:pm2} are honest,
this summand cannot be one of the $(\ga_i x + \de_i y)^{2s}$'s, and so
must be $(\bar \al_k x + \bar \be_k y)^{2s}$ for some $k$. But the
 $(\al_i x + \be_i y)^{2s}$'s are distinct, as are the 
$(\bar \al_i x + \bar \be_i y)^{2s}$'s. This implies that $a \le c$, a contradiction.

It follows that $w = s+1$, hence $b + c \ge s+1$, and since $a \ge
c+1$ and $d \ge b+1$, we 
have $a + d \ge s+3$ and so $(a+b)+(c+d) = (a+d)+(b+c) \ge
2s+4$. Finally, $a,d \ge 1$ by hypothesis. If $b=0$, then $c \ge s+1$, which
contradicts $a > c$ by Theorem 3.1(1). Assuming $c=0$ leads to a similar contradiction. 
\end{proof}

\section{Quartics and sextics}

We begin this section with an application of the last two theorems.

\begin{theorem}
A binary quartic form has a unique signature.
\end{theorem}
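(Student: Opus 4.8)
The plan is to show that a binary quartic cannot have two incomparable signatures, so that by the general nonemptiness of $\mathcal{S}(p)$ it has exactly one. Suppose for contradiction that $p \in F_{2,4}$ has two signatures $(a,b)$ and $(c,d)$ with $a>c$ and $b<d$. Here $s=2$, so Theorem 3.1(1) restricts all badges to $\{(3,0),(0,3)\}\cup[0,2]^2$, and Theorem 3.2 gives $a+d\ge s+3 = 5$, $b+c\ge s+1 = 3$, and $a,b,c,d\ge 1$. The first step is to observe that these numerical constraints are already extremely tight for $s=2$: from $a,b,c,d\ge 1$, $b<d$, $a>c$, and the bound $b\le 2$, $c\le 2$, $d\le 2$ (none of $b,c,d$ can equal $3$ since that would force the other coordinate to be $0$, contradicting $a,b,c,d\ge 1$), I would enumerate the finitely many possibilities for $(a,b)$ and $(c,d)$.

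Carrying out the enumeration: since $b\ge 1$ and $b<d\le 2$, we must have $b=1$ and $d=2$. Since $c\ge 1$ and $c<a$, either $c=1,a=2$ or $c=1,a=3$ or $c=2,a=3$. The constraint $a+d\ge 5$ forces $a=3$, so either $(a,b),(c,d) = (3,1),(1,2)$ or $(3,1),(2,2)$. But $b+c\ge 3$ rules out $(3,1),(1,2)$ (there $b+c=2$), leaving only the single possibility $(a,b) = (3,1)$, $(c,d) = (2,2)$. The next step is to eliminate this last case. The key tool is the catalecticant: by Corollary 2.10(1), writing $S(H_p) = (p_H,n_H)$, we have $(p_H,n_H)\preceq (3,1)$ and $(p_H,n_H)\preceq(2,2)$, hence $(p_H,n_H)\preceq (2,1)$, so $\operatorname{rank}(H_p) = p_H+n_H\le 3$. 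Now $H_p$ is a quadratic form in $s+1 = 3$ variables, so $\operatorname{rank}(H_p)\le 3$ automatically; I need the reverse pressure. Here I would invoke Theorem 3.2's width argument specialized to this case, or re-run it directly: form $q = p_+ + \bar p_-$, which by \eqref{E:nonmin} has representations of lengths $a+d = 5$ and $b+c = 4$; since $q\in Q_{2,4}$ its width $w = \operatorname{rank}(H_q)$ satisfies $w\le s+1 = 3$, and the argument in Theorem 3.2 shows $w\le s$ is impossible (it would force $a\le c$), so $w = 3$ and $b+c\ge s+1 = 3$ — consistent but not yet contradictory. The decisive observation should be that $\operatorname{rank}(H_q) = 3 = s+1$, so by Theorem 2.9(3) the length-$4$ representation $q = \sum_{i=1}^b(\ga_i x+\de_i y)^4 + \sum_{i=1}^c(\bar\al_i x+\bar\be_i y)^4$ with $b+c = 4 = w+1$ can be "steered": for each $(\al,\be)$ there is a length-$3$ representation of $q$ using a power of $\al x + \be y$. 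Choosing $(\al,\be) = (\ga_1,\de_1)$ (one of the subtracted forms in the first representation of $p$) gives $q = \la(\ga_1 x+\de_1 y)^4 + (\text{two more powers})$ with $\la>0$; subtracting $p_-$ — that is, subtracting $\sum_{i=1}^b(\ga_i x+\de_i y)^4$, which contains $(\ga_1 x + \de_1 y)^4$ — then produces a representation of $p = q - p_-$ of badge $\preceq (3,1)$ but with the $(\ga_1,\de_1)$ term partially cancelled, so badge $(2, \le 1)\prec (3,1)$, contradicting minimality of $(3,1)$.

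I expect the main obstacle to be making that last steering-and-cancellation step airtight, since Theorem 2.9(3) only guarantees a positive coefficient $\la$ on the chosen power, and one must check that the resulting count after cancellation genuinely drops below $(3,1)$ rather than merely re-deriving badge $(3,1)$ in a different form; in particular one must handle the possibility that after steering, the coefficient $\la$ does not fully cancel the $(\ga_1 x+\de_1 y)^4$ term, in which case the badge of $p = q - p_-$ becomes $(3,1)$ again with a sign on that term, so a more careful bookkeeping (or a dimension count on the $2$-parameter family of length-$3$ representations of $q$ versus the finitely many constraints) is needed. An alternative route that sidesteps this: note that in the only surviving case $\operatorname{rank}(H_p)\le 3$ while $p$ has badge $(2,2)$ with $a+b = 4 = \operatorname{rank}(H_p) + 1$ at most; if in fact $\operatorname{rank}(H_p)\le 3$ forces $H_p$ indefinite or degenerate, one applies Theorem 2.3 / the Sylvester-form machinery to exhibit directly a length-$3$ representation of $p$, whose badge $(a',b')$ with $a'+b'=3$ must by Corollary 2.10(3) equal $S(H_p)\preceq(2,1)$, contradicting that both $(3,1)$ and $(2,2)$ are minimal. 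I would present whichever of these two finishes is cleaner once the rank computation is pinned down; the enumeration reducing to a single case is the robust part of the argument.
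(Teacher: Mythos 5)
Your enumeration contains an oversight that manufactures all of the difficulty in the second half of the argument. You correctly observe that none of $b,c,d$ can equal $3$, because Theorem 3.1(1) confines every badge to $\{(3,0),(0,3)\}\cup[0,2]^2$, so a coordinate equal to $3$ forces the other coordinate to be $0$, contradicting $a,b,c,d\ge 1$ from Theorem 3.2. But the identical observation applies to $a$: since $b\ge 1$, the pair $(a,b)=(3,1)$ is simply not an element of $\{(3,0),(0,3)\}\cup[0,2]^2$ and cannot be a badge, let alone a signature. Hence $a\le 2$ as well, and the inequality $a+d\ge s+3=5$ is already impossible because $a,d\le 2$. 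The proof ends there; no case survives the enumeration. This is precisely the paper's (very short) argument: Theorem 3.2 gives $a+b+c+d=(a+d)+(b+c)\ge 2s+4=8$ together with $a,b,c,d\ge 1$, while Theorem 3.1(1) then forces both signatures into $[0,2]^2$, so $a+b+c+d\le 8$, with equality only when $(a,b)=(c,d)=(2,2)$ --- contradicting incomparability.

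Because the case $(a,b)=(3,1)$, $(c,d)=(2,2)$ is vacuous, the entire steering-and-cancellation discussion is unnecessary; and, as you yourself flag, it is not airtight as written. Theorem 2.9(3) guarantees only a positive coefficient $\la$ on the chosen power of $\ga_1 x+\de_1 y$, with no control ensuring that subtracting $p_-$ actually lowers the count of negative summands rather than reproducing a badge $(3,1)$ representation. The proposed alternative finish is also not sound: $\operatorname{rank}(H_p)\le 3$ and $S(H_p)\preceq(2,1)$ do not by themselves yield a length-$3$ representation of $p$, since Corollary 2.10(1) only bounds badges of $p$ from below by $S(H_p)$; the paper explicitly remarks that for $s>1$ the pair $(p_H,n_H)$ need not belong to ${\mathcal B}(p)$. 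So as submitted the proof is incomplete, but the repair is one line: apply Theorem 3.1(1) to the coordinate $a$ exactly as you did to $b$, $c$, and $d$.
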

\begin{proof}
By Theorem 3.1(1), the possible  signatures in $F_{2,4}$ are $(0,3)$, $(3,0)$ and
$[0,2]^2$. By Theorem 3.2, if $p$ has two
incomparable signatures 
$(a,b)$ and $(c,d)$, then $a+b+c+d \ge 5+3=8$, so $(a,b) = (c,d) =
(2,2)$, a contradiction.
\end{proof}

The uniqueness allows us to compute the signatures of binary quartics.

\begin{theorem}
If $p \in F_{2,4}$, then either $S(p) = S(H_p)$ or $p \neq \pm \ell^4$
is a product of linear factors  and $S(p) = (2,2)$.
\end{theorem}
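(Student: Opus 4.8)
The plan is to split into cases according to the structure of $H_p$, using Theorem 4.1 (so I only need to exhibit one minimal badge, knowing it is forced to be \emph{the} signature) and Corollary 2.10 throughout. First I would handle the case $\mathrm{rank}(H_p) \le 2$: here Sylvester's theorem (Theorem 2.3) with $r = \mathrm{rank}(H_p) \le 2$ shows that $p$ has a representation of length $\mathrm{rank}(H_p)$ obtained by taking $h$ to be a Sylvester form in the kernel of the Hankel matrix — I must check the chosen $h$ has distinct real linear factors, or reduce to an explicit normal form (e.g. $p = \pm\ell^4$, $p = \ell_1^4 \pm \ell_2^4$, or $p = \ell^3 m$). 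Then Corollary 2.10(3) gives $S(p) = S(H_p)$ directly, since length equals $\mathrm{rank}(H_p)$.

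Next I would treat $\mathrm{rank}(H_p) = 3$, which is the generic situation and the heart of the argument. The one-dimensional kernel of the $3\times 3$ Hankel matrix is spanned by a single vector $(c_0,c_1,c_2)$, giving a unique (up to scalar) candidate Sylvester form $h(x,y) = c_0 x^2 + c_1 xy + c_2 y^2$. If $h$ has two distinct real roots, then Theorem 2.3 gives $p = \la_1 \ell_1^4 + \la_2 \ell_2^4$ with $\ell_1, \ell_2$ distinct, so the length is $2 = \mathrm{rank}(H_p)$ only if one $\la_i$ vanishes — more carefully, the length is at most $2$, and since $\mathrm{rank}(H_p)=3 > 2$ this is impossible, so I should instead say: $p$ has a representation of length $\le 2$, contradicting $\mathrm{rank}(H_p)=3$ via Corollary 2.10(1). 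Hence when $\mathrm{rank}(H_p)=3$ the unique Sylvester quadratic $h$ is \emph{not} a product of two distinct real linear forms: either it is irreducible over $\rr$ (definite) or a perfect square (a repeated real factor). In the definite case, $p \in Q_{2,4}$ or $-p \in Q_{2,4}$: indeed, $H_p$ has rank $3$ and I claim it is then semidefinite precisely when... actually here I would invoke Theorem 2.9(1): $p \in Q_{2,4} \iff H_p$ psd. When $H_p$ is definite, width is $3$, Theorem 2.9(3) gives a length-$3$ positive representation, and Corollary 2.10(2) yields $S(p) = S(H_p) = (3,0)$; symmetrically $(0,3)$. When $H_p$ is indefinite of rank $3$, I need to produce a badge $(a,b)$ with $a+b = 3$ to apply Corollary 2.10(3); Theorem 2.3 with the repeated-root Sylvester form does not immediately give $3$ distinct powers, so I would perturb: take $h_\epsilon$ a nearby product of distinct real linear forms whose coefficient vector is \emph{approximately} in the kernel — but Theorem 2.3 requires exact annihilation. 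The cleaner route: when $h = (\beta x - \alpha y)^2$, one knows $p = \la(\alpha x + \beta y)^4 + (\text{lower-rank piece})$ or use the classical fact that a rank-$3$ catalecticant with a repeated Sylvester root means $p$ has a linear factor; then $p$ is a product of linear factors with at least one repeated, $p \ne \pm\ell^4$, and by Theorem 3.1(2) $S(p) = (2,2)$ — but wait, $(2,2)$ must then equal $S(H_p)$, which forces $S(H_p) = (2,2)$, consistent with rank $4$. So I have mislabeled: the repeated-root case actually belongs to $\mathrm{rank}(H_p) = 4$ territory or needs the factorization observation. I would sort this by the standard lemma that $\mathrm{rank}(H_p) = 3$ with indefinite $H_p$ forces $h$ to have distinct real roots after all (so this sub-case is vacuous), leaving only the definite sub-case above.

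Finally, the case $\mathrm{rank}(H_p) = 4$: then $H_p$ is nondegenerate on a $3$-dimensional space... no — $H_p$ is a form in $s+1 = 3$ variables, so $\mathrm{rank}(H_p) \le 3$ always for quartics. So there are only three cases, $\mathrm{rank}(H_p) \in \{1,2,3\}$, and the residual issue is entirely the rank-$3$ indefinite case. Here I would argue: $H_p$ rank $3$ and indefinite means $p \notin Q_{2,4}$ and $-p \notin Q_{2,4}$ (Theorem 2.9(1)), so $S(H_p) = (2,1)$ or $(1,2)$; I claim $p$ then has a badge summing to $3$. The tool is Theorem 2.3 once more: I must show the kernel vector of the Hankel matrix, when it corresponds to an indefinite (hence necessarily not-definite, not-perfect-square by a discriminant computation tied to the signature of $H_p$) quadratic $h$, has distinct real roots. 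Concretely, $\mathrm{disc}(h)$ has a sign governed by whether $H_p$ is definite: $H_p$ definite $\Leftrightarrow$ $h$ has no real roots; $H_p$ indefinite $\Leftrightarrow$ $h$ has distinct real roots; the borderline ($h$ a square) corresponds to $\mathrm{rank}(H_p) \le 2$. Granting this discriminant dictionary — which I would verify by a short direct computation on the $3\times 3$ Hankel form, or cite from Kung–Rota / \cite{R2} — Theorem 2.3 gives $p = \la_1 \ell_1^4 + \la_2 \ell_2^4 + \la_3 \ell_3^4$ with the $\ell_i$ distinct and all $\la_i \ne 0$ (else length $\le 2$, contradicting rank $3$), so badge $(a,b)$ with $a+b=3$, and Corollary 2.10(3) closes it: $S(p) = S(H_p)$. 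The cases where $p$ happens to be a product of linear forms with $\mathrm{rank}(H_p) < 3$ (e.g.\ $\ell^3 m$ has rank $2$, $\ell_1^2\ell_2^2$, four distinct linear factors have rank $3$) must be cross-checked against Theorem 3.1(2): whenever $p\ne\pm\ell^4$ splits completely, $S(p)=(2,2)$, so I must confirm $S(H_p)=(2,2)$ in those instances (true for four distinct factors; for, e.g., $\ell^3 m$ one has $\mathrm{rank}(H_p)=2$ and $S(H_p)=(1,1)\ne(2,2)$, so that $p$ falls in the ``product of linear factors'' alternative of the theorem, not the $S(p)=S(H_p)$ alternative — the statement's disjunction is inclusive and this is exactly why it is phrased with ``either/or''). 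The main obstacle is precisely this bookkeeping: pinning down, for each rank of $H_p$ and each factorization type of $p$, which of the two alternatives holds, and in particular establishing the discriminant dictionary that makes the rank-$3$ Sylvester form split into distinct real factors exactly when $H_p$ is indefinite.
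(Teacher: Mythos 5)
Your overall strategy --- reduce everything to Corollary 2.10 by exhibiting, for each $p$, an honest representation whose length equals $\mathrm{rank}(H_p)$ --- is a legitimate alternative to the paper's route. (The paper instead notes that any quartic which is not a product of real linear forms has a definite quadratic factor, normalizes $p$ to $(x^2+y^2)(x^2+uy^2)$ with $|u|\le 1$ by a rotation, and writes down the single explicit identity \eqref{E:duh}, whose badge visibly matches $S(H_{q_u})$ in every case.) However, your execution of the decisive case contains a genuine gap rooted in a dimension miscount. For a quartic, $H_p$ \emph{is} the full $3\times 3$ Hankel matrix, so if $\mathrm{rank}(H_p)=3$ its kernel is zero: there is no quadratic Sylvester form at all, and the ``unique kernel vector $(c_0,c_1,c_2)$'' on which your entire rank-$3$ analysis and your concluding ``discriminant dictionary'' are built does not exist. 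A one-dimensional kernel of the $3\times 3$ matrix occurs exactly when $\mathrm{rank}(H_p)=2$. To produce a length-$3$ representation in the rank-$3$ indefinite case you must instead find a \emph{cubic} Sylvester form with three distinct real roots inside the two-dimensional kernel of the $2\times 4$ matrix $(a_{i+j})_{0\le i\le 1,\,0\le j\le 3}$, and proving that such a cubic exists whenever $H_p$ is indefinite of rank $3$ and $p$ has a definite quadratic factor is precisely the content of the theorem; your proposal never establishes this (the ``short direct computation'' you defer to concerns the nonexistent quadratic $h$). Only the rank-$3$ definite subcase is actually closed, via Theorem 2.9(3) and Corollary 2.10(2).

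The rank $\le 2$ cases also need more care than your blanket claim that Corollary 2.10(3) then gives $S(p)=S(H_p)$. When $\mathrm{rank}(H_p)=2$ and the unique quadratic kernel form is definite, $p$ can be a product of four distinct real linear factors: for $p=x^4-6x^2y^2+y^4=(x^2-(3+2\sqrt 2)y^2)(x^2-(3-2\sqrt 2)y^2)$ one finds $\mathrm{rank}(H_p)=2$, $S(H_p)=(1,1)$, kernel form $t_0^2+t_2^2\leftrightarrow x^2+y^2$, yet $S(p)=(2,2)$ by Theorem 3.1(2). This lands in the ``product of linear factors'' branch of the statement, not the $S(p)=S(H_p)$ branch, and it also contradicts your later assertion that four distinct linear factors force rank $3$. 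The list of normal forms you propose for rank $\le 2$ ($\pm\ell^4$, $\ell_1^4\pm\ell_2^4$, $\ell^3 m$) omits exactly this class. The paper's proof avoids all of this bookkeeping by disposing of the completely splitting case first with Theorem 3.1(2) and then carrying out one explicit one-parameter computation; if you want to keep the catalecticant/Sylvester-form approach, the missing lemma you must supply is the existence of a totally real cubic apolar form for every quartic with a definite quadratic factor and nonsingular indefinite $H_p$.
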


\begin{proof}
For clarity we write out $p$ and the matrix for $H_p$:
\begin{equation*}
p(x,y) = a_0x^4 + 4a_1x^3y + 6a_2x^2y^2+4a_3xy^3+a_4y^4, \quad
H_p = \begin{pmatrix}
a_0 & a_1 & a_2\\
a_1 & a_2 & a_3 \\
a_2 & a_3 & a_4 \\
\end{pmatrix}.
\end{equation*}

If $p$ is a product of real linear factors, then either $p = \pm (\al
x + \be y)^4$, in which case $S(p)=S(H_p) = (1,0)$ or (0,1), or else
$S(p) = (2,2)$ by Theorem 3.1(2).

Otherwise, we may assume that $p$ has a definite quadratic
factor. By Lemma 2.8, neither $S(p)$ nor $S(H_p)$ is affected by an
invertible linear change of variables. Therefore, we may diagonalize
this factor and assume that
\begin{equation*}
p(x,y) = (x^2 + y^2)(a x^2 + b x y + c y^2).
\end{equation*}
Now apply (2.7) and, in effect, rotate the axes so that $x^2+y^2$ is
unchanged and $ax^2 + b xy + cy^2$ loses its $xy$-term. After
possible scaling, multiplying  by $-1$ (which flips both $S(p)$ and
$S(H_p)$) and permuting $x$ and $y$, we see that $p$ can be written as
\begin{equation*}
q_u(x,y) = (x^2 + y^2)(x^2 + u y^2) = x^4 + 6 \left(\tfrac{u+1}6\right)
x^2y^2 + y^4, \qquad |u| \le 1.
\end{equation*}
As noted earlier, $S(q_1) = S(H_{q_1}) = (3,0)$, and as $q_{-1}(x,y) =
x^4 - y^4$ and $H_{q_{-1}} =
t_0^2-t_2^2$,  $S(q_{-1}) = S(H_{q_{-1}}) = (1,1)$. Assume 
$|u| < 1$, let $\rho^2 = \frac{u+1}6$ and note that
\begin{equation}\label{E:duh}
q_u(x,y) = \frac 12\biggl( (x + \rho y)^4 +  (x -
 \rho  y)^4 \biggr) +
\left(\frac{-1 +34 u - u^2}{36}\right) y^4.    
\end{equation}
Since $u \in (-1,1)$, \eqref{E:duh} has badge $(3,0)$ if $u > 17 - 12\sqrt 2$,
$(2,0)$ if $u = 17 - 12\sqrt 2$ and $(2,1)$ if    $u < 17 - 12\sqrt 2$.
On the other hand, 
\begin{equation*}
H_{q_u} = \begin{pmatrix}
1  & 0 & \frac{1+u}6\\
0 & \frac{1+u}6 & 0 \\
\frac{1+u}6 & 0 & u \\
\end{pmatrix},
\end{equation*}
and since $\frac{1+u}6 > 0$ for $u \in (-1,1)$, $S(H_q)$ is determined
by the sign of $u - (\frac{1+u}6)^2 = \frac{-1+34 u - u^2}{36}$. In
each case $S(q_u) = S(H_{q_u})$, and this completes the proof.
\end{proof}

The quartic $q_{17-12\sqrt 2}(x,y)$ is less mysterious than it seems: 
\begin{equation*}
\begin{gathered}
x^4 + y^4 = (x^2 + \sqrt 2 x y + y^2)(x^2 - \sqrt 2 x y + y^2) \mapsto\\
 (u^2+v^2)(u^2-4uv+5v^2) \mapsto (w^2+z^2)((3+2\sqrt 2)w^2 +
 (3-2\sqrt 2)z^2)\\ = (3+2\sqrt 2)q_{17-12\sqrt 2}(w,z) 
\end{gathered}
\end{equation*}
under  $(x,y) \mapsto (u-v,\sqrt 2 v)$ 
and $(u,v) \mapsto (a w + b z, - bw + az)$, where $a = \cos\tfrac
{3\pi}8 = \frac14 (2-\sqrt 2)^{1/2}$ and  $b = \sin\tfrac {3\pi}8 =
\frac 14 (2+\sqrt 2)^{1/2}$.
Another approach to real canonical forms for real quartics can be
found in \cite[p.217]{PR}.

\begin{example}
The only possible positive jumps in $F_{2,4}$ occur if $f_m \to f$ and $S(f)
= (2,2)$, since otherwise $S(p) = S(H_p)$. For example, let
\begin{equation*}
f_m(x,y) = \tfrac 1m x^4 + 6x^2y^2 + \tfrac 1m y^4 = 
\tfrac 1m (x^4 + 6m x^2y^2 + y^4)
\end{equation*}
It is easy to see that for $m \ge 1$, $S(f_m) = (2,1)$, but $f_m \to
f$,
where $f(x,y) = x^2y^2$ and $S(f) = (2,2)$. In other words,
$Q_{2,4}^2 - Q_{2,4}^1$ is {\it not} a closed set. 
\end{example}

We turn to sextics. Theorem 3.2 does not rule out incomparable
signatures.

\begin{corollary}
If $p \in S_{2,6}$ and $|{\mathcal S}(p)| \ge 2$, then ${\mathcal
  S}(p) = \{(2,3),(3,2)\}$. 
\end{corollary}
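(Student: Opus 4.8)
The plan is to apply Theorem 3.1(1) and Theorem 3.2 with $s=3$ to pin down exactly which pairs of incomparable signatures are even combinatorially possible. By Theorem 3.1(1), every badge of $p \in F_{2,6}$ lies in $\{(4,0),(0,4)\} \cup [0,3]^2$, so in particular every signature does. Suppose $p$ has two incomparable signatures; after possibly swapping them and swapping the roles of the two coordinates (i.e.\ replacing $p(x,y)$ by $p(y,x)$, which transposes all badges), we may assume they are $(a,b)$ and $(c,d)$ with $a>c$ and $b<d$. Theorem 3.2 then forces $a,b,c,d \ge 1$, $a+d \ge s+3 = 6$, $b+c \ge s+1 = 4$, and $\max\{a+b,c+d\}\ge s+2 = 5$.

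Next I would enumerate. Since $(a,b),(c,d) \in [0,3]^2$ (the options $(4,0),(0,4)$ are excluded because all four coordinates are $\ge 1$), we have $a,b,c,d \le 3$. From $a+d \ge 6$ with $a,d \le 3$ we get $a=d=3$. From $b+c\ge 4$ with $b<d=3$ and $c<a=3$ we get $b,c \le 2$ and $b+c \ge 4$, hence $b=c=2$. So the only possibility for an incomparable pair is $\{(3,2),(2,3)\}$. Finally I must rule out that $\mathcal{S}(p)$ contains three or more elements: any two of them would have to be incomparable (signatures are by definition minimal, so no one contains another), and the argument just given shows the only incomparable pair available is $\{(3,2),(2,3)\}$, which has only two elements. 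Hence $|\mathcal{S}(p)| \ge 2$ forces $\mathcal{S}(p) = \{(2,3),(3,2)\}$ exactly.

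There is no real obstacle here; the corollary is a bookkeeping consequence of the two structural theorems, and the only point requiring a word of care is the symmetry reduction. One should note that interchanging $x$ and $y$ sends $\sum_k \lambda_k(\alpha_k x + \beta_k y)^{2s}$ to $\sum_k \lambda_k(\beta_k x + \alpha_k y)^{2s}$ with the same coefficients $\lambda_k$ and the same distinctness/honesty properties, so it induces the map $(a,b)\mapsto(b,a)$ on $\mathcal{B}(p)$ and hence on $\mathcal{S}(p)$; this legitimizes assuming $a>c$, $b<d$ rather than the reverse. (There is a typo in the statement to flag as well: $S_{2,6}$ should read $F_{2,6}$.) With that in place the proof is three lines of arithmetic on the inequalities from Theorem 3.2 together with the cube $[0,3]^2$ from Theorem 3.1(1).
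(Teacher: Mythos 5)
Your proof is correct and is essentially the paper's own argument: the paper likewise derives $a+d \ge 6$ and $b+c \ge 4$ from Theorem 3.2 with $s=3$ and concludes $a=d=3$, $b=c=2$ (your extra care about the cube $[0,3]^2$, the three-or-more case, and the typo $S_{2,6}$ for $F_{2,6}$ only makes explicit what the paper leaves implicit). One small slip worth noting: interchanging $x$ and $y$ does \emph{not} transpose badges --- as you yourself observe, the $\la_k$ are unchanged, so the badge stays $(a,b)$; it is $p \mapsto -p$ that induces $(a,b) \mapsto (b,a)$ --- but this is harmless, since merely relabeling which of the two incomparable signatures you call $(a,b)$ already yields $a>c$ and $b<d$.
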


\begin{proof}
If $(a,b)$ and $(c,d)$ are signatures for a sextic and $a > c, b < d$,
then Theorem 3.2 implies that 
$a+d \ge 6$, $b+c \ge 4$ and so $a=d=3$ and $b=c=2$.
\end{proof}

The following theorem analyzes our principal binary sextic example.
\begin{theorem}
Let $q_{\la}(x,y) = 6 x^5 y + 20\la x^3y^3 + 6 x y^5$.
\begin{enumerate}
\item If $\la = 1$, then ${\mathcal S}(q_{\la}) = \{(1,1)\}$
\item If $\la > 0$, $\la \neq 1$, then ${\mathcal S}(q_{\la}) = \{(2,2)\}$.
\item If $-\frac 35 < \la \le 0$, then  ${\mathcal S}(q_{\la}) =
  \{(2,3), (3,2)\}$.
\item If $\la \le -\frac 35$, then ${\mathcal S}(q_{\la}) =
  \{(3,3)\}$.
\end{enumerate}
\end{theorem}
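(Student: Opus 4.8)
The plan is to handle the four cases with a shared toolkit — the catalecticant $H_{q_\la}$, Sylvester's Theorem 2.3, the sign-count bound Corollary 2.5, and the symmetry $q_\la(x,-y) = -q_\la(x,y)$ — with essentially all the work in case (3). A direct computation gives
\[
H_{q_\la}(t) = 2(t_0t_1+t_2t_3) + 2\la(t_0t_3+t_1t_2) = (1+\la)(t_0+t_2)(t_1+t_3) + (1-\la)(t_0-t_2)(t_1-t_3),
\]
so $S(H_{q_\la}) = (1,1)$ if $\la = \pm1$ and $S(H_{q_\la}) = (2,2)$ otherwise; by Corollary 2.10(1) every badge of $q_\la$ dominates $S(H_{q_\la})$.

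Cases (1), (2) and (4) are comparatively short. For (1), the identity $(x+y)^6-(x-y)^6 = 2q_1$ shows $(1,1)\in\mathcal B(q_1)$, a badge of length $2 = \operatorname{rank}(H_{q_1})$, so Corollary 2.10(3) gives $\mathcal S(q_1) = \{(1,1)\}$. For (2) I would choose, given $\la>0$ with $\la\neq1$, reals $a,b$ with $0<a^2\neq b^2$ and $\la = (1+a^2b^2)/(a^2+b^2)$ (the right side takes every value in $(0,\infty)$), and then scalars $\rho,\tau\neq0$ with $\rho\bigl((ax+y)^6-(ax-y)^6\bigr) + \tau\bigl((bx+y)^6-(bx-y)^6\bigr) = q_\la$; this is an honest length-$4$ representation with two positive and two negative coefficients, so $(2,2)\in\mathcal B(q_\la)$, and with the catalecticant bound, $\mathcal S(q_\la) = \{(2,2)\}$. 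For (4), factor $q_\la = 2xy\,(3x^4+10\la x^2y^2+3y^4)$; when $\la<-\tfrac35$ the quartic factor has two distinct positive roots in the variable $x^2/y^2$, so $q_\la$ is a product of six pairwise distinct real linear forms, and Theorem 3.1(2) gives $\mathcal S(q_\la) = \{(3,3)\}$; when $\la = -\tfrac35$ one has $q_{-3/5} = 6xy(x-y)^2(x+y)^2$, a product of six real linear forms with multiplicity, so Corollary 2.6 together with Theorem 3.1(1) leaves $(3,3)$ as the only possible badge, whence again $\mathcal S(q_{-3/5}) = \{(3,3)\}$.

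Case (3) is the heart of the matter. \emph{Length at least $5$:} for $\la\neq\pm1$ the Hankel matrices of Theorem 2.3 of shapes $5\times3$ and $4\times4$ have trivial kernel (the $4\times4$ matrix is $H_{q_\la}$, which is nonsingular), so there is no honest representation of length $\le3$; and for $\la\le0$ the $2$-dimensional degree-$4$ kernel consists of biquadratic forms $c_0x^4+c_2x^2y^2+c_4y^4$ with $c_0+\la c_2+c_4 = 0$, and such a form never splits into four distinct real linear factors (the associated quadratic in $x^2/y^2$ cannot have two positive roots once $\la\le0$), so there is no honest length-$4$ representation either. \emph{Length at most $5$:} I would exhibit a degree-$5$ Sylvester form with five distinct real linear factors. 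For $\la=0$ one may take $(x-y)(x^4+2x^3y-8x^2y^2+2xy^3+y^4)$. For $\la\in(-\tfrac35,0)$ I would use the ``almost-palindromic'' ansatz with root multiset $\{r,1/r,s,1/s,1\}$: writing $\rho = r+\tfrac1r$ and $\si = s+\tfrac1s$, the two Hankel conditions collapse to the single relation $\rho\si = -\tfrac{(1+\la)(\rho+\si+2)}{\la}$, so it suffices to take $\rho$ and $\si$ to be the roots of a quadratic $z^2-Az+B$ with $B = -\tfrac{(1+\la)(A+2)}{\la}$ chosen so that the roots are real, distinct, and each of modulus exceeding $2$. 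Since the length is already known to be $\ge5$, Sylvester's Theorem 2.3 promotes such a form to an honest length-$5$ representation with all five coefficients nonzero.

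It then remains to read off the signatures. Because the quartic factor of $q_\la = 2xy\,(3x^4+10\la x^2y^2+3y^4)$ is positive definite for $\la\in(-\tfrac35,0]$, the form $q_\la$ has exactly $\tau=2$ real linear factors. In any honest length-$5$ representation, ordering the angles and invoking Corollary 2.5 forces at least $2$ sign changes in the cyclic coefficient sequence; that number is even, and is $0$ only when all coefficients have one sign, so the representation has between one and four positive coefficients, and then the catalecticant bound $(2,2)\preceq(a,b)$ pins the badge down to $(2,3)$ or $(3,2)$. Being of minimal length, this badge is a signature; and negating all coefficients in any representation, which is legitimate by $q_\la(x,-y) = -q_\la(x,y)$, shows $\mathcal B(q_\la)$ — hence $\mathcal S(q_\la)$ — is symmetric under $(a,b)\mapsto(b,a)$, so both $(2,3)$ and $(3,2)$ belong to $\mathcal S(q_\la)$. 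Corollary 4.3 then yields $\mathcal S(q_\la) = \{(2,3),(3,2)\}$. The step I expect to be the real obstacle is the degree-$5$ Sylvester form construction valid on \emph{all} of $(-\tfrac35,0]$: the almost-palindromic family degenerates as $\la\to-\tfrac35^{+}$, and verifying that an admissible quadratic (both roots real, distinct, of modulus $>2$) persists right up to the endpoint is a delicate, rather tight inequality check.
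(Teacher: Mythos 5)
Your overall strategy coincides with the paper's: compute the signature of $H_{q_\la}$ (your factorization $(1+\la)(t_0+t_2)(t_1+t_3)+(1-\la)(t_0-t_2)(t_1-t_3)$ is an equivalent route to the paper's eigenvalues $\pm(1+\la),\pm(1-\la)$), use Corollary 2.10 to force $(2,2)\preceq (a,b)$ for every badge when $\la\neq\pm1$, use the splitting into six real linear factors for case (4), quartic Sylvester forms for case (2), and quintic Sylvester forms plus the symmetry Lemma 4.6 for case (3). Cases (1), (2) and (4) are correct: your condition $\la=(1+a^2b^2)/(a^2+b^2)$ is exactly the paper's relation $\ga_1^2\ga_2^2=-1+\la(\ga_1^2+\ga_2^2)$, and your alternative Sylvester form for $\la=0$, namely $(x-y)(x^4+2x^3y-8x^2y^2+2xy^3+y^4)$, checks out (the quartic factor reduces to $z^2+2z-10$ in $z=t+1/t$, with roots $-1\pm\sqrt{11}$ of modulus greater than $2$, giving five distinct real factors none equal to that of $x-y$).

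The one genuine gap is the step you flag yourself: for $\la\in(-\tfrac35,0)$ you reduce the existence of a quintic Sylvester form to finding real $\rho\neq\si$, each of modulus exceeding $2$, with $\la\rho\si=-(1+\la)(\rho+\si+2)$, but you never exhibit such a pair. Without that witness the length could a priori be $6$ and the conclusion of case (3) would collapse to $\{(3,3)\}$, so this cannot be left as an ``expected obstacle.'' Fortunately it is not the delicate endpoint problem you anticipate: the interval $(-\tfrac35,0)$ is open at $-\tfrac35$ (that endpoint belongs to case (4)), so you only need a witness for each fixed interior $\la$, and a one-parameter specialization suffices. In the paper's variables $\rho=-(2+u)$, $\si=-(2+v)$ with $u,v>0$, the single constraint becomes $6+u+v+\la(10+3u+3v+uv)=0$, and the choice $v=2u$ yields $u=\frac{3+5\la}{-2\la}>0$ for all $\la\in(-\tfrac35,0)$; then $\rho,\si<-2$ are distinct and the construction closes. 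The rest of your case (3) is sound: no biquadratic element of the degree-four kernel splits into four distinct real factors once $\la\le 0$; the quintic Sylvester form promotes to an honest length-five representation because the length is already known to be at least five; and the badge bookkeeping via Corollary 2.10(1), Theorem 3.1(1) and Lemma 4.6 pins down $\{(2,3),(3,2)\}$. (Your detour through Corollary 2.5 is redundant there, since the catalecticant bound already excludes any badge with $\min(a,b)\le 1$.)
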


Before we prove this theorem, we state its immediate corollary.
\begin{corollary}
The Law of Inertia fails for binary sextics: there exist $q \in
F_{2,6}$ with two signatures. 
\end{corollary}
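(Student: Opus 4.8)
The plan is to derive Corollary 4.5 directly from Theorem 4.4(3), so the real work is all in the sextic analysis. First I would observe that the corollary is essentially a restatement: if some $q \in F_{2,6}$ has $|\mathcal{S}(q)| \ge 2$, then by Corollary 4.3 we must have $\mathcal{S}(q) = \{(2,3),(3,2)\}$, i.e. $q$ has two incomparable signatures and hence no unique signature. Theorem 4.4(3) exhibits such a $q$ explicitly, namely $q_\lambda(x,y) = 6x^5y + 20\lambda x^3 y^3 + 6xy^5$ for any $\lambda \in (-\tfrac35, 0]$. So the proof of Corollary 4.5 is simply: "Take $\lambda = 0$ (or any $\lambda$ in the stated range) in Theorem 4.4. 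Then $q_\lambda$ has the two signatures $(2,3)$ and $(3,2)$, which are incomparable, so $q_\lambda$ has no unique signature."

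Since the corollary's content is entirely borrowed from Theorem 4.4(3), the only thing worth sketching is how one would prove that part of Theorem 4.4, as that is where every obstacle lies. To show $(2,3), (3,2) \in \mathcal{B}(q_\lambda)$, I would use Sylvester's theorem (Theorem 2.3) with the catalecticant-type Hankel matrix for $d = 6$, $r = 3$: one looks for cubic Sylvester forms $h(x,y) = \prod_{k=1}^3(\beta_k x - \alpha_k y)$ annihilated by the $4 \times 4$ Hankel matrix built from the coefficients of $q_\lambda$. Because $q_\lambda$ is "odd" under $(x,y) \mapsto (y,x)$ up to sign and has only the coefficients $a_1 = a_5$ (suitably normalized) and $a_3$ nonzero, the Hankel system is sparse and its kernel can be parametrized; real cubic factorizations of the kernel elements give representations with $r = 3$, and by Corollary 2.5 the sign-change count of the $\lambda_k$-sequence around the circle forces these to have badge $(2,3)$ or $(3,2)$ once $\lambda < 0$ makes $q_\lambda$ split with enough real linear factors. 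Then one checks via Theorem 3.1(1) and the fact that $\mathrm{rank}(H_{q_\lambda}) = 4$ (so the width-related obstructions of Theorem 2.9 and Corollary 2.10 apply) that no badge of total size $\le 4$ exists, so both $(2,3)$ and $(3,2)$ are minimal; finally Corollary 4.3 pins down $\mathcal{S}(q_\lambda)$ to exactly $\{(2,3),(3,2)\}$.

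The main obstacle is not in Corollary 4.5 itself — which is a one-line consequence — but in the explicit construction inside Theorem 4.4(3): one must actually produce the two representations of different badge and, harder, rule out all shorter or comparable representations. Ruling out a representation of length $4$ with a comparable badge (say $(2,2)$, which would dominate neither $(2,3)$ nor $(3,2)$ but would itself be smaller) requires knowing the width of $q_\lambda$ and invoking the uniqueness clauses of Theorem 2.9; the delicate point is verifying that when $\mathrm{rank}(H_{q_\lambda}) = s+1 = 4$, the forced presence of an arbitrary chosen power (Theorem 2.9(3)) is not available here because $q_\lambda \notin Q_{2,6}$ (it is indefinite), so one instead argues directly from Sylvester's theorem that every length-$4$ representation corresponds to a quartic Sylvester form, analyzes its possible real factorizations, and shows each forces badge strictly above both candidates. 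That combinatorial case analysis of sign patterns, governed by Corollary 2.5, is the technical heart; everything else, including the deduction of Corollary 4.5, is routine.
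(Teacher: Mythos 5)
Your derivation of the corollary itself is exactly the paper's: the paper states Corollary 4.5 as an ``immediate corollary'' of Theorem 4.4, and your first paragraph (take any $\lambda\in(-\tfrac35,0]$, invoke Theorem 4.4(3), note that $(2,3)$ and $(3,2)$ are incomparable) is precisely that. If the corollary is all that is being judged, you are done and correct.

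However, your sketch of the ``technical heart'' (Theorem 4.4(3)) would not work as written, and since you present it as the substance of the argument it is worth flagging. First, you propose looking for \emph{cubic} Sylvester forms ($r=3$, a $4\times 4$ Hankel matrix): that matrix is exactly $H_{q_\lambda}$, whose eigenvalues are $\pm(1+\lambda),\pm(1-\lambda)$, all nonzero for $\lambda\in(-\tfrac35,0]$, so its kernel is trivial and no length-$3$ representation exists. The representations realizing $(2,3)$ and $(3,2)$ have length $5$, so the relevant object is a \emph{quintic} Sylvester form ($r=5$, a $2\times 6$ system), which the paper constructs explicitly as $(x+y)(x^2+(2+u)xy+y^2)(x^2+(2+v)xy+y^2)$ with $0<u<v$ chosen to satisfy the linear constraint. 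Second, Corollary 2.5 cannot force the badge to be $(2,3)$ or $(3,2)$ in this range: for $-\tfrac35<\lambda\le 0$ the factor $x^4+\tfrac{10}{3}\lambda x^2y^2+y^4$ is definite, so $q_\lambda$ has only the two real linear factors $x$ and $y$, and Corollary 2.5 yields merely $\sigma\ge 2$. The actual forcing is $S(H_{q_\lambda})=(2,2)$ together with Corollary 2.10(1) (every badge dominates $(2,2)$, so a badge of total $5$ must be $(2,3)$ or $(3,2)$) and Lemma 4.6 (oddness of $q_\lambda$ in $y$ swaps $(a,b)$ with $(b,a)$, so both occur). Third, your plan to show that every length-$4$ representation ``forces badge strictly above both candidates'' is impossible on its face (a badge of total $4$ cannot dominate one of total $5$); what must be shown is that \emph{no} length-$4$ representation exists at all, which the paper does by computing the kernel of the $3\times 5$ Hankel system, reducing to $c_0x^4+c_2x^2y^2-(c_0+\lambda c_2)y^4$, and showing the resulting condition $\gamma_1^2\gamma_2^2=-1+\lambda(\gamma_1^2+\gamma_2^2)$ has no solution when $\lambda\le 0$. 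So the one-line deduction of Corollary 4.5 stands, but the supporting sketch needs to be replaced by the paper's actual Sylvester-form analysis.
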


The following lemma is a fruitful way of generating forms with two signatures.
\begin{lemma}
If $p(x,-y) = -p(x,y)$, then $(a,b) \in
\mathcal B(p) \implies (b,a) \in \mathcal B(p)$.
\end{lemma}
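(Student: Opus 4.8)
The plan is to exploit the symmetry $p(x,-y) = -p(x,y)$ by transforming a given representation via the change of variables $(x,y) \mapsto (x,-y)$. First I would observe that if $p$ has an honest representation with badge $(a,b)$, say
\begin{equation*}
p(x,y) = \sum_{i=1}^a (\al_i x + \be_i y)^{2s} - \sum_{j=1}^b (\ga_j x + \de_j y)^{2s},
\end{equation*}
then substituting $(x,y) \mapsto (x,-y)$ gives a representation of $p(x,-y)$ whose summands are the $(\al_i x - \be_i y)^{2s}$ and $(\ga_j x - \de_j y)^{2s}$. By hypothesis $p(x,-y) = -p(x,y)$, so negating both sides yields
\begin{equation*}
p(x,y) = \sum_{j=1}^b (\ga_j x - \de_j y)^{2s} - \sum_{i=1}^a (\al_i x - \be_i y)^{2s},
\end{equation*}
which has badge $(b,a)$.

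The one point requiring care — and the only real obstacle — is checking that honesty is preserved under this operation, so that $(b,a)$ is genuinely in $\mathcal B(p)$ rather than merely a badge of a possibly degenerate representation. This is immediate: the map $(\al,\be) \mapsto (\al,-\be)$ is invertible on $\mathbb R^2$, hence sends pairwise non-proportional linear forms to pairwise non-proportional linear forms; thus distinct summands remain distinct and the transformed representation is honest. Therefore $(a,b) \in \mathcal B(p)$ implies $(b,a) \in \mathcal B(p)$, as claimed. (One could phrase this even more briefly by noting that $q(x,y) := -p(x,-y)$ equals $p$ and applying the general fact, recorded in the introduction, that a linear change of variables preserves $\mathcal B$ up to the obvious relabeling — but the direct substitution argument is cleaner here since it tracks the sign flip explicitly.)

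No genuinely hard step is anticipated; the lemma is essentially a bookkeeping observation, and the proof is a two-line computation once the honesty remark is in place. I would write it out in just a few lines.
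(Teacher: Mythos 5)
Your argument is exactly the paper's: substitute $(x,y)\mapsto(x,-y)$ in the given representation, negate using the hypothesis, and read off the badge $(b,a)$. The only addition is your explicit check that honesty is preserved, which the paper leaves implicit; the proposal is correct.
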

\begin{proof}
Since $p(x,y) = -p(x,-y)$,
\begin{equation*}
p(x,y) = \sum_{k=1}^r \la_k (\al_k x + \be_k y)^{2s} \implies
p(x,y) = \sum_{k=1}^r -\la_k (\al_k x - \be_k y)^{2s}.
\end{equation*}
\end{proof}

\begin{proof}[Proof of Theorem 4.4]
We first note that $q_{\la}(x,y) = 6xy(x^4 + \frac{10}3 \la x^2y^2 + y^4)$
is a product of six linear factors if $\frac{10}3 \la \le -2$,
establishing case (4). The eigenvalues of 
\begin{equation*}
H_{q_{\la}} = 
\begin{pmatrix}
0 & 1 & 0 & \la \\
1 & 0 & \la & 0 \\
0 & \la & 0 & 1 \\
\la & 0 & 1 & 0
\end{pmatrix}
\end{equation*}
are $\pm(1+\la), \pm(1-\la)$. Thus, if $\la \neq \pm 1$, then
$S(H_{q_{\la}}) = (2,2)$. If $\la = 1$, then $S(H_{q_1}) = (1,1)$ and 
$q_1(x,y) = \frac 12 (x+y)^6 - \frac 12 (x-y)^6$,
establishing case (1). If $\la > -\frac 35$, $\la \neq 1$, then it follows
from Corollary 2.10(3), Theorem 3.1(1) and Lemma 4.6 that ${\mathcal
  S}(q_{\la})$ is $\{(2,2)\}$, 
$\{(2,3),(3,2)\}$ or $\{(3,3)\}$, depending only on the length of $q_{\la}$.

Suppose $\la \neq 1$, $\la > - \frac 35$ and $q_{\la}$ has length
4. Then $h(x,y) = \sum_{i=0}^4 c_i x^{4-i}y^i$ is a Sylvester form for
$q_{\la}$ provided it has four distinct real factors and
\begin{equation*}
\begin{pmatrix}
0 & 1 & 0 & \la & 0\\
1 & 0 & \la & 0 & 1\\
0 & \la & 0 & 1 & 0\\
\end{pmatrix}
\cdot
\begin{pmatrix}
c_0\\c_1\\c_2\\c_3\\c_4
\end{pmatrix}
=\begin{pmatrix}
0\\0\\ 0
\end{pmatrix}.
\end{equation*}
Since $c_1 + \la c_3 = \la c_1 + c_3 = 0$ and $\la^2 \neq 1$, 
$h(x,y) = c_0 x^4 + c_2 x^2y^2 - (c_0 + \la c_2)y^4$. 
If $c_0 = 0$ or $c_0 + \la c_2 = 0$, then $h$ is divisible by $y^2$ or
$x^2$, hence we may scale so that $c_0 = 1$ and assume $h(x,y) =  (x^2
- \ga_1^2y^2)(x^2-\ga_2y^2)$ for some real $\ga_1,\ga_2 \neq 0$. In
this case,
$c_2 = -(\ga_1^2 + \ga_2^2)$ and 
\begin{equation} \label {E:ga}
\ga_1^2\ga_2^2 = -1 + \la(\ga_1^2 +\ga_2^2)
\end{equation}
This is clearly impossible if $\la \le 0$. If $\la > 0$, then
\eqref{E:ga} is equivalent to
\begin{equation*}
\ga_2^2 = \frac {\la \ga_1^2 -1}{\ga_1^2 - \la}.
\end{equation*}
If $\la > 1$, then $\ga_1^2 = 2\la$ implies $\ga_2^2 = 2\la -  1/\la >
0$; if $\la < 1$, then $\ga_1^2 = 2/\la$ implies $\ga_2^2 = \frac{\la}{2-\la^2}$ and
$\ga_1^2 > 2 > \la > \ga_2^2$. In either case, $h$ is a Sylvester
form, so $q_{\la}$ has length 4, and by Corollary 2.10(3), $S(q_{\la})
= (2,2)$. This establishes (2).

In the remaining case, $- \frac 35 < \la \le 0$. We wish to find a quintic
Sylvester form   $h_{\la}(x,y) = \sum_{i=0}^5 c_i x^{5-i}y^i$. Take $0
< u < v$ and let
\begin{equation}\label{E:last}
\begin{gathered}
h_{\la}(x,y) = (x+y)(x^2 + (2 + u) x y + y^2)(x^2 +(2 + v) x y + y^2),\\
6 + u + v + \la (10 + 3 u + 3 v + u v) = 0.
\end{gathered}
\end{equation} 
This satisfies the system
\begin{equation}\label{E:noine}
\begin{pmatrix}
0 & 1 & 0 & \la & 0 & 1\\
1 & 0 & \la & 0 & 1 & 0
\end{pmatrix}
\cdot
\begin{pmatrix}
c_0\\c_1\\c_2\\c_3\\c_4\\c_5
\end{pmatrix}
=\begin{pmatrix}
0\\0
\end{pmatrix}.
\end{equation}
By setting $v = 2u$, we see that \eqref{E:last} holds for
$u = \frac{3 + 5\la}{-2\la}$ and  $v = \frac{3 + 5\la}{-\la}$, so that
$q_{\la}$ for $\la \in (-\frac 35, 0)$ has length five and  $S(q_\la) = \{(2,3),(3,2)\}$.
For $\la = 0$, \eqref{E:last} is impossible for positive $u,v$. In
this case, \eqref{E:noine} becomes $c_0 + c_4 = c_1 + c_5 = 0$. Let
\begin{equation*}
g(x,y) = x(x+y)(x+2y)(x+3y)(x-6y) = x^5 - 25 x^3 y^2 - 60 x^2 y^3 - 36 x y^4
\end{equation*}
and let $h_0(x,y) = g(x,y/\sqrt 6)$. Then $c_0 =1, c_4 = -6^{-2}\cdot 36 =
-1$ and $c_1=c_5=0$, so that $h_0$ satisfies \eqref{E:noine} and $h_0$
has length five, so $S(q_0) = \{(2,3),(3,2)\}$.
\end{proof}

\begin{example}
An exact calculation shows that 
\begin{equation} \label{E:mma}
\begin{gathered}
1296(x+y)^6 - 567(x+2y)^6 +112(x+3y)^6 - (x-6y)^6 - 840x^6 \\
= 3024(x^5 y + 36 x y^5).
\end{gathered}
\end{equation}
If we scale \eqref{E:mma} by sending  $y \mapsto \frac
y{\sqrt 6}$ and divide by $84\sqrt 6$, we obtain a representation of
$q_0(x,y) = 6x^5y + 6xy^5$. 
Observe that $\lim_m q_{1/m} \to q_0$ provides another example of a
positive jump. Also observe that for $\la$ close to 0, neither 
 $S(H_{q_\la})$ nor the zero structure of $q_\la$ change, but
a  jump occurs.
\end{example} 

Larger jumps are possible, as shown in our final example.
\begin{example}
Let $r_\la(x,y) = (x^2-y^2)^3 + 15\la x^2y^2(x^2-y^2)$. Then $r_0(x,y)$
is a product of six linear factors, so $S(r_0) = (3,3)$ and
$r_{1/5}(x,y) = x^6-y^6$, so $S(r_{1/5}) = (1,1)$.  We now show
that if $0 < \la < \frac 15$, then $S(r_\la) = (2,2)$. First, it
is easy to show that $S(H_{r_\la}) = (2,2)$ if $\la \neq 0, -\frac 45$,
  hence the length of $r_\la$ is $\ge 4$  if $\la \in
  (0,\frac 15)$. Second, we produce a Sylvester form of degree four:
 $h(x,y) = \sum_{i=0}^4 c_ix^{4-i}y^i$ must have distinct real factors
 and satisfy
\begin{equation}\label{E:sext}
\begin{pmatrix}
1 & 0 & -\be & 0 & \be\\
0 & -\be & 0 & \be & 0\\
-\be & 0 & \be & 0 & 1\\
\end{pmatrix}
\cdot
\begin{pmatrix}
c_0\\c_1\\c_2\\c_3\\c_4
\end{pmatrix}
=\begin{pmatrix}
0\\0\\ 0
\end{pmatrix},
\end{equation}
where $\be =\frac 15 - \la  \in (0,\frac 15)$. As before, we guess
a solution: 
\begin{equation*}
h(x,y) = (x^2 + (2+u)xy + y^2)(x^2 + (2+v) xy + y^2)
\end{equation*}
with $u\neq v > 0$, and note that \eqref{E:sext} is satisfied if
\begin{equation*}
-1 - \be + \be (6 + 2 u + 2 v + u v) = 0 
\end{equation*}
It can be verified that the choice $u = \frac{1-5\be}{3\be}$ and
$v=\frac{1-5\be}{1+\be}$ meets this criterion. 
\end{example}

\section{Conjectures and Open Questions}

We believe that unique signatures do not exist in $F_{2,2s}$ 
when $s \ge 4$.  Is there an analogue for
Theorem 4.2 in $F_{2,2s}$ when $s \ge 3$? (Even for sextics, this seems to be difficult.)
Are there forms  with signature $(s,s)$ which do not split into a
  product of real forms?
Is the existence of multiple signatures is always a singular
phenomenon, or might they, for example, occur in a   neighborhood?
How can one characterize forms $p$ for which  $S(p) = S(H_p)$? 
Do there exist forms with more than two signatures or with signatures
$(a,b), (c,d)$ for which $a+b \neq  c+d$? (They would have to occur in degree $\ge 8$.) 
What jumps are possible for jump signature sequences? What happens in $F_{3,4}$, the other case in
which catalecticants play a role in determing membership in $Q_{n,2s}$?
Finally,  which properties discussed here are interesting in real closed
  fields besides $\mathbb R$?

%--------------------------------------------------------------------------


\begin{thebibliography}{23}
%--------------------------------------------------------------------------



\bibitem{G}  S. Gundelfinger, \emph{Zur Theorie der bin\"aren Formen},
  J. Reine Angew. Math., \textbf{100} (1886), 413--424. 

\bibitem{K}  J. P. S. Kung, \emph{Gundelfinger's theorem on binary forms},
Stud. Appl. Math., \textbf{75} (1986), 163--169, MR0859177
(87m:11020).

\bibitem{KR}  J. P. S. Kung and G.-C. Rota, \emph{The invariant theory
    of binary forms}, 
 Bull. Amer. Math. Soc. (N. S.), \textbf {10} (1984),  
 27--85, MR0722856 (85g:05002). 


\bibitem{PS}  G. P\'olya and G. Szeg\"o, \emph{Problems and theorems
    in analysis, II}, Springer-Verlag, New York 1976, MR0465631 (57 \#5529).

\bibitem{PR} V. Powers and B. Reznick, \emph{Notes towards a
    constructive proof of Hilbert's Theorem on ternary quartics},
  Proceedings, Quadratic forms and their applications, Dublin 1999
  (A. Ranicki ed.) Contemp. Math., \textbf{272} (2000), 209-227,
  MR1803369   (2001h:11049). 

\bibitem{R1}  B. Reznick, \emph{Sums of even powers of real linear
    forms}, Mem. Amer. Math. Soc. \textbf{96}, 463, 1992, MR1096187
  (93h:11043).

\bibitem{R2}  B. Reznick, \emph{Homogeneous polynomial solutions to
    constant coefficient PDE's over fields}, Adv. Math., \textbf{117} (1996),
  179-192, MR1371648 (97a:12006).


\bibitem{R3} B. Reznick, \emph{The length of binary forms}, in preparation.

\bibitem{S1}  J.J. Sylvester, \emph{An Essay on Canonical Forms,
    Supplement to a Sketch of a Memoir on Elimination, Transformation
    and Canonical Forms}, originally published by George Bell, Fleet
  Street, London, 1851; Paper 34 in \emph{Mathematical
  Papers}, Vol. 1, Chelsea, New York, 1973. Originally published by
Cambridge University Press in 1904. 

\bibitem{S2}   J. J. Sylvester, \emph{On a remarkable discovery in the
    theory of canonical forms and of hyperdeterminants}, originally in
  Phiosophical Magazine, vol. 2, 1851; Paper 42  in \emph{Mathematical
  Papers}, Vol. 1, Chelsea, New York, 1973. Originally published by
Cambridge University Press in 1904.

\bibitem{S3}  J. J. Sylvester, \emph{On an elementary proof and generalization
    of Sir Isaac Newton's hitherto undemonstrated rule for the
    discovery of imaginary roots}, Proc. Lond. Math. Soc. \textbf{1}
  (1865/1866), 1--16; Paper 84  in \emph{Mathematical
  Papers}, Vol.2, Chelsea, New York, 1973. Originally published by
Cambridge University Press in 1908.

\end{thebibliography}
\end{document}